\renewcommand\section{\@startsection {section}{1}{\z@}%
                                   {-3.5ex \@plus -1ex \@minus -.2ex}%
                                   {2.3ex \@plus.2ex}%
                                   {\centering\normalfont\bf}}
 \numberwithin{equation}{section}
\numberwithin{equation}{section}
\numberwithin{equation}{section}
\theoremstyle{plain}
 \newtheorem{theorem}{Theorem}[section]
 \newtheorem{Prop}[theorem]{Proposition}
 \newtheorem{Lem}[theorem]{Lemma}
 \newtheorem{Cor}[theorem]{Corollary}
 \newtheorem{Example}[theorem]{Example}
\begin{document}

\title  {Induced measures of simple random walks on Sierpinski graphs}
 \author{Ting-Kam Leonard Wong}
\date{}

\address{Department of Mathematics\\ The Chinese University of Hong Kong\\
Shatin, NT \\ Hong Kong.}
\email{tkwong@math.cuhk.edu.hk}

\thanks {}

 \keywords{Hyperbolic boundary, Martin boundary, simple random walk, Sierpinski gasket}
 \subjclass{Primary: 60J10, 28A80; Secondary: 60J50.}

\date{\today}
\maketitle

\begin{abstract}
In \cite{[K]}, Kaimanovich defined an augmented rooted tree $(X, E)$ corresponding to the Sierpinski gasket $K$, and showed that the Martin boundary of the simple random walk $\{Z_n\}$ on it is homeomorphic to $K$. It is of interest to determine the hitting distributions $v_{{\bf x}}(\cdot) = {\mathbb{P}}_{{\bf x}}\{\lim_{n \rightarrow \infty} Z_n \in \cdot\}$ induced on $K$. Using a reflection principle based on the symmetries of $K$, we show that if the walk starts at the root of $(X, E)$, the hitting distribution is exactly the normalized Hausdorff measure $\mu$ on $K$. In particular, each $v_{{\bf x}}$, ${\bf x} \in X$, is absolutely continuous with respect to $\mu$. This answers a question of Kaimanovich [K, Problem 4.14]. The argument can be generalized to other symmetric self-similar sets.
\end{abstract}

\begin{section}{Introduction}
In \cite{[DS1]}, Denker and Sato constructed a transient Markov chain and showed that its Martin boundary can be identified as the Sierpinski gasket. This opened a new direction to study analysis on fractals via the boundary theory of random walk (\cite{[DS2]}, \cite{[DS3]}, \cite{[JLW]}, \cite{[K]}, \cite{[Ki]}, \cite{[LN]}, \cite{[LW1]}, \cite{[LW2]}, \cite{[P]}, \cite{[WL]}). The key idea is to realize a fractal as the boundary of a random walk or graph; the potential theory associated with the discrete system is used to induce on the fractal objects such as harmonic measures, harmonic functions, and Dirichlet forms.

\medskip

There are several ways to construct the random walks, and they are all related to the symbolic space of the underlying iterated function system. In \cite{[DS1]}, the transition function is one-way and gives rise to a reducible chain. This allows the Green function and Martin kernels be estimated explicitly. This construction was generalized to a class of p.c.f. fractals in \cite{[JLW]} and in a more general setting in \cite{[LW2]}. Also see \cite{[LN]} for an interesting way of assigning probabilities where the minimal Martin boundary is a proper subset of the Martin boundary.

\medskip

Another approach was provided by Kaimanovich \cite{[K]}, who introduced an {\it augmented rooted tree} (which he also called the {\it Sierpinski graph}) to realize the Sierpinski gasket as a {\it hyperbolic boundary} in the sense of Gromov \cite{[G]}. Moreover, he showed that the simple random walk on the graph has a Martin boundary homeomorphic to the Sierpinski gasket. The augmented rooted tree was generalized in \cite{[LW1]} to cover all iterated function systems satisfying the open set condition (OSC), and it was shown in \cite{[WL]} that for {\it strictly reversible} random walks the Martin boundary coincides with the hyperbolic boundary, which is the self-similar set.

\begin{figure}[h]
\begin{center}
\includegraphics[width=5.5cm,height=6cm]{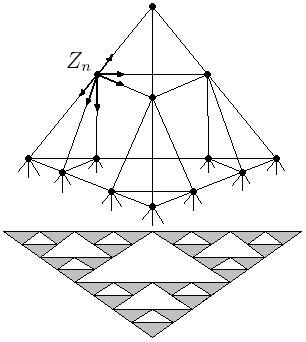}
\caption{The simple random walk $\{Z_n\}$ on $(X, E)$ induces a family of hitting distributions on the Sierpinski gasket.}
\end{center}
\end{figure}

The random walk induces naturally a family of {\it hitting distributions}, or {\it harmonic measures}, on the self-similar set. The purpose of this paper is to study these induced measures for the simple random walk on the Sierpinski graphs in \cite{[K]}. The main problem is to determine whether the measures are absolutely continuous with respect to the Hausdorff or a self-similar measure. Apart from its probabilistic interest, this question is important as the hitting distribution from the root serves as the {\it reference measure} of the {\it induced Dirichlet form} constructed in \cite{[WL]} (also see \cite{[Ki]} for the case of Cantor sets).

\medskip

Our main result answers a question of Kaimanovich [K, Problem 4.14].

\begin{theorem} \label{thm 1.1}
Consider the simple random walk $\{Z_n\}$ on the augmented rooted tree $(X, E)$ corresponding to the Sierpinski gasket $K$ in ${\mathbb{R}}^d$, $d \geq 1$. Let $Z_{\infty} = \lim_n Z_n$ be the limit of the walk on $K$ and let $\nu_{\vartheta}(\cdot) = {\mathbb{P}}_{\vartheta}\{Z_{\infty} \in \cdot\}$ be the hitting distribution where the starting point is the root $\vartheta$. Then $\nu_{\vartheta}$ equals the normalized $\alpha$-dimensional Hausdorff measure $\mu$ on $K$, where $\alpha = \dim_H K$.
\end{theorem}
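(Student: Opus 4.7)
The plan is to prove the theorem by showing that $\nu_\vartheta(K_w) = (d+1)^{-n}$ for every word $w$ of length $n$ coding a level-$n$ cell $K_w \subset K$. Since the normalized Hausdorff measure $\mu$ is the unique Borel probability on $K$ with this property, it will follow that $\nu_\vartheta = \mu$. Via Kaimanovich's identification of the Martin boundary with $K$, the event $\{Z_\infty \in K_w\}$ agrees almost surely with the event that the walk eventually remains in the subtree of $(X,E)$ rooted at $w$, so the statement becomes a probabilistic claim about the simple random walk on the augmented rooted tree.

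The first ingredient is global symmetry. The isometry group of $K$ is the symmetric group $S_{d+1}$ acting by permutations of the $d+1$ vertices of the simplex. Each $\sigma \in S_{d+1}$ lifts to a graph automorphism of $(X,E)$ fixing $\vartheta$ and acting letterwise on words, and the simple random walk is equivariant under graph automorphisms. Hence $\nu_\vartheta$ is $S_{d+1}$-invariant, which in particular gives $\nu_\vartheta(K_i) = 1/(d+1)$ for each $i \in \{0,\ldots,d\}$ and, more generally, ensures that $\nu_\vartheta(K_w)$ depends only on the $S_{d+1}$-orbit of $w$.

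Symmetry alone is insufficient, since for $n \geq 2$ the letterwise action has several orbits (for example, the diagonal cells $K_{ii\cdots i}$ form one orbit and the off-diagonal cells form others). The plan is to induct on $n$, using a reflection principle to equate cells lying in different orbits at each level. Fix $v$ with $|v|=n$; conditional on $\{Z_\infty \in K_v\}$, after an almost-surely finite last exit time the walk stays in the subtree below $v$, so the conditional distribution of the level-$(n+1)$ cell containing $Z_\infty$ is supported on the $d+1$ children $\{v0, v1, \ldots, vd\}$. The reflection principle supplies involutions of the graph that exchange two such children while preserving the law of the walk, forcing that conditional distribution to be uniform; combined with the induction hypothesis this yields $\nu_\vartheta(K_{vi}) = (d+1)^{-(n+1)}$.

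The main obstacle is implementing the reflection principle at the induction step. For $|v| \geq 1$ the symmetries of the subsimplex $K_v$ that permute its children typically do not extend to global isometries of $K$, and because horizontal edges of $(X,E)$ allow the walk to leave and re-enter the subtree below $v$, the walk's behavior inside $K_v$ is not self-similar in a naive sense. The crux is to construct involutions acting on a sufficiently large portion of $(X,E)$ --- large enough to support the paths that determine the hitting distribution on $\{v0,\ldots,vd\}$, yet compatible with the simple random walk transition structure. Once these reflections are in place the induction closes, and $\nu_\vartheta = \mu$ follows.
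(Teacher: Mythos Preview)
Your outline correctly identifies the two ingredients the paper uses --- global $S_{d+1}$-invariance and a reflection principle --- but the way you propose to deploy the reflection is not the one that works, and as stated it has a genuine gap.

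The gap is in the induction step. You want, for each cell $K_v$, an involution of (a portion of) $(X,E)$ that swaps two children $vi$ and $vj$ and preserves the law of the simple random walk. For $i,j\neq 0$ the global isometry $R_{ij}$ does this. But no such involution can swap the ``corner'' child $v0$ with an ``edge'' child $vj$, already at level one: in $(X,E)$ the vertex $00$ has $d$ horizontal neighbours (only $01,\dots,0d$), while $0j$ for $j\neq 0$ has $d+1$ (it also borders $j0$), so $\deg(00)\neq\deg(0j)$ and there is simply no graph automorphism exchanging them. The same degree obstruction persists at every level. Your last-exit idea does not rescue this: the process after the last exit from the complement of the subtree under $v$ is \emph{not} a simple random walk on a graph isomorphic to $(X,E)$ (the horizontal edges to neighbouring cells are missing, and conditioning on never leaving changes the transition law), so you cannot appeal to the induction hypothesis that way either.

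What the paper actually does is different in kind. Rather than seek an involution that permutes children of a fixed cell, it builds a \emph{coupling on path space}: starting from $\{Z_n\}$, one reflects the walk every time it crosses between first-level cells $iX$ and $jX$ (composing with $R_{ij}$), and then time-changes to skip visits to $\vartheta$. The resulting process $\{\tilde Z_k\}$ lives in $0X$ and is shown to be a simple random walk on $0X$ started at $0$. Since $0X$ is isomorphic to $X$, its hitting distribution on $K_0$ is $\nu_\vartheta\circ F_0^{-1}$; on the other hand, unfolding the reflections shows $\{\tilde Z_\infty\in B\}=\{Z_\infty\in\bigcup_i R_{0i}B\}$ for $\mathcal A'_d$-invariant $B\subset K_0$. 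This gives a \emph{self-similar identity} $\nu_\vartheta(F_0^{-1}B)=\nu_\vartheta(\bigcup_i R_{0i}B)$ relating $\nu_\vartheta$ at two consecutive scales, and it is this identity (not conditional uniformity over children) that, together with group invariance, forces $\nu_\vartheta(K_{\bf x})=(d+1)^{-|{\bf x}|}$ by induction. In short, the reflection principle here is a dynamic folding of trajectories across the level-one boundaries, not a static graph involution at level $n$; your proposal needs to be reworked along those lines.
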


Since $\{Z_n\}$ is irreducible, we immediately get the following (see [Wo, p. 221]).

\begin{Cor} \label{thm 1.2}
For all ${\bf x} \in X$, the hitting distribution $\nu_{{\bf x}}(\cdot) = {\mathbb{P}}_{{\bf x}}\{Z_{\infty} \in \cdot\}$ is absolutely continuous with
respect to $\mu$.
\end{Cor}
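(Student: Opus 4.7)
The plan is to bootstrap from the already-established identity $\nu_{\vartheta} = \mu$ using irreducibility of the walk together with the strong Markov property, which is the standard template for transferring absolute continuity of hitting distributions between starting points (as in Woess, p.~221).

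First I would fix an arbitrary ${\bf x} \in X$ and a Borel set $B \subseteq K$ with $\mu(B)=0$. The goal is to show $\nu_{\bf x}(B)=0$. Let $T_{\bf x} = \inf\{n \geq 0 : Z_n = {\bf x}\}$ be the first hitting time of ${\bf x}$. Since $\{Z_n\}$ is irreducible on the countable state space $X$ and ${\bf x}$ is reachable from $\vartheta$ along edges of $(X,E)$, we have $p := \mathbb{P}_{\vartheta}\{T_{\bf x} < \infty\} > 0$.

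Next, I would decompose the event $\{Z_{\infty} \in B\}$ under $\mathbb{P}_\vartheta$ by conditioning on whether the walk ever visits ${\bf x}$. On the event $\{T_{\bf x} < \infty\}$, the strong Markov property at $T_{\bf x}$ gives that the future $(Z_{T_{\bf x}+n})_{n \ge 0}$ is distributed as the walk started at ${\bf x}$, and in particular $Z_\infty$ conditional on this event has distribution $\nu_{\bf x}$. Hence
\begin{equation*}
\nu_{\vartheta}(B) \;\geq\; \mathbb{P}_{\vartheta}\{T_{\bf x} < \infty,\ Z_{\infty} \in B\} \;=\; p \cdot \nu_{\bf x}(B).
\end{equation*}
By Theorem~\ref{thm 1.1}, $\nu_{\vartheta}(B) = \mu(B) = 0$, and since $p > 0$ it follows that $\nu_{\bf x}(B)=0$. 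Thus $\nu_{\bf x} \ll \mu$, as claimed.

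There is no real obstacle here: the only ingredients are Theorem~\ref{thm 1.1}, irreducibility (immediate from the fact that $(X,E)$ is a connected graph and the walk is simple), and the strong Markov property, all of which are standard. The only subtlety worth double-checking is that $Z_\infty$ is $\mathbb{P}_{\bf x}$-almost surely well defined for every starting point ${\bf x}$, which follows from transience of the walk (already invoked implicitly in the Martin boundary framework of \cite{[K]}).
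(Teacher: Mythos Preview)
Your argument is correct and is precisely the standard irreducibility/strong Markov argument the paper invokes by citing [Wo, p.~221]; the paper gives no explicit proof beyond that reference, so your write-up simply spells out what the citation points to.
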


The main obstacle is that $Z_n$ can travel not only up and down but also through the horizontal edges. This makes it very difficult to estimate the transition probabilities and the Martin kernels. Instead of estimating them, we shall exploit the symmetries of the Sierpinski gasket and prove directly that $\nu_{\vartheta}$ satisfies two identities which we call the {\it group invariance identity} (Theorem \ref{thm 4.4}) and the {\it self-similar identity} (Theorem \ref{thm 6.5}). They force $\nu_{\vartheta}$ to be exactly $\mu$. The argument involves interesting probabilistic and algebraic constructions. Our method can be generalized to some other symmetric self-similar sets, but for simplicity we restrict ourselves to the case of Sierpinski gaskets.

\medskip

This paper is organized as follows. In Section 2, we review the construction of the Sierpinski graph $(X, E)$ and identify the Sierpinski gasket as the Martin boundary of the simple random walk $\{Z_n\}$ on $(X, E)$. For motivation, we also give a heuristic argument for the simplest case of dimension $1$, where the boundary is simply a unit interval. To generalize the argument to higher dimensions, in Section 3 we analyze the symmetries of the Sierpinski gasket and their actions on $(X, E)$. They are used to formulate the group invariance identity. In Section 4 we introduce a {\it reflection principle} and construct a coupling of $\{Z_n\}$ with a simple random walk $\{\tilde{Z}_k\}$ on the subgraph $0X$ which is isomorphic to $X$. This is used in Section 5 to prove the self-similar identity and the main theorem. Section 6 contains further remarks, including extension to other symmetric self-similar sets, and some open questions.

\end{section}

\bigskip

\begin{section}{Preliminaries and motivations}
In this section we introduce the notations and identify the Sierpinski gasket as a Martin boundary. The reader can refer to \cite{[WL]} for a detailed treatment under a more general setting. Fix $d \geq 1$ and a collection of points $\{p_i\}_{i = 0}^d$ that generates a regular simplex in ${\mathbb{R}}^d$, i.e., $|p_i - p_j| = \delta_{ij}$. The $d$-{\it dimensional} {\it Sierpinski gasket}, denoted by $K = K^d$, is the self-similar set of the iterated function system (IFS) $F_i(x) = \frac{1}{2}p_i + \frac{1}{2}x$, $i = 0, ..., d$.

\medskip

Following \cite{[K]} and \cite{[LW1]}, we define an {\it augmented rooted tree} (or {\it Sierpinski graph}) $(X, E)$ as follows. Let
\[
X = \{\vartheta\} \cup \bigcup_{n = 1}^{\infty} \{0, 1, ..., d\}^n
\]
be the symbolic space corresponding to the IFS $\{F_i\}_{i = 0}^d$. Here $\vartheta$ is the {\it empty word} which will be regarded as the root of $(X, E)$. For ${\bf x} = i_1...i_n \in X$, we let $|{\bf x}| = n$ be the {\it length} of ${\bf x}$ and define $F_{{\bf x}} = F_{i_1} \circ \cdots \circ S_{i_n}$ (we define $S_{\vartheta} = \mathrm{id}$ by convention). We let $K_{{\bf x}} = F_{{\bf x}}(K)$ be the {\it cell} corresponding to ${\bf x} \in X$. If ${\bf x} = i_1...i_n \in X \setminus \{\vartheta\}$, we use ${\bf x}^- = i_1...i_{n-1}$ to denote the {\it ancestor} of ${\bf x}$.

\medskip

The edge set $E = E_v \cup E_h$ of the Sierpinski graph consists of {\it vertical edges} $(E_v)$ and {\it horizontal edges} $(E_h)$. We define, for ${\bf x} \neq {\bf y}$,
\[
\begin{array}{lllll}
({\bf x}, {\bf y}) \in E_v & \Leftrightarrow & {\bf y} = {\bf x}^- \text{ or } {\bf x} = {\bf y}^-,\\
({\bf x}, {\bf y}) \in E_h &\Leftrightarrow& |{\bf x}| = |{\bf y}| \text{ and } F_{{\bf x}}(K) \cap F_{{\bf y}}(K) \neq \emptyset.
\end{array}
\]
We write ${\bf x} \sim {\bf y}$ if $({\bf x}, {\bf y}) \in E$. By a {\it path} in $(X, E)$, we mean a finite sequence $\{{\bf x}_n\}_{n = 0}^N$ such that ${\bf x}_n \sim {\bf x}_{n+1}$ for all $n$.

\begin{figure}[h] \label{fig 2}
\begin{center}
\includegraphics[width=11cm,height=4cm]{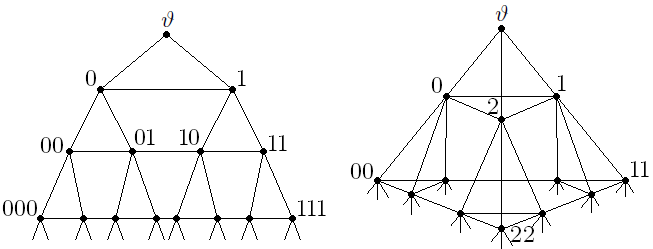}
\caption{The Sierpinski graphs corresponding to $d = 1$ (left) and $d = 2$ (right).}
\end{center}
\end{figure}

\medskip

For ${\bf x} = i_1i_2...i_n \neq \vartheta$, let $p_{{\bf x}} = F_{i_1i_2...i_{n-1}}(p_{i_n})$ be the `\textit{dyadic point}' on $K$ corresponding to ${\bf x}$. In particular, for $i \neq j$,
\[
p_{ij} = p_{ji} = F_i(p_j) = F_j(p_i) = \frac{1}{2}p_i + \frac{1}{2}p_j
\]
is the midpoint of the line segment $[p_i, p_j]$.

\medskip

Let $\{Z_n\}$ be the simple random walk on $(X, E)$ with transition function $P$, i.e.,
\[
P({\bf x}, {\bf y}) = \left\{
                        \begin{array}{ll}
                          \frac{1}{{\mathrm{deg}}({\bf x})}, & \hbox{if ${\bf x} \sim {\bf y}$,} \\
                          0, & \hbox{otherwise.}
                        \end{array}
                      \right.
\]
Here ${\mathrm{deg}}({\bf x})$, the {\it degree} of ${\bf x}$, is the number of edges connecting ${\bf x}$. The {\it Green function} of the walk is defined by
\[
G({\bf x}, {\bf y}) = \sum_{n = 0}^{\infty} P^n({\bf x}, {\bf y}), \ \ \ {\bf x}, {\bf y} \in X.
\]
Here $P^n$ is the $n$-step transition function. The {\it Martin kernel} is defined by
\[
K({\bf x}, {\bf y}) = \frac{G({\bf x}, {\bf y})}{G(\vartheta, {\bf y})}, \ \ \ {\bf x}, {\bf y} \in X.
\]
The {\it Martin compactification} of the random walk is defined as the minimal compactification $\widehat{X}$ of $X$ such that for each ${\bf x} \in X$, the function $K({\bf x}, \cdot)$ extends continuously up to the {\it Martin boundary} ${\mathcal{M}} = \widehat{X} \setminus X$ (see \cite{[Wo]}). It can be shown that under the Martin topology, $Z_n$ converges almost surely to a point $Z_{\infty}$ on the Martin boundary ${\mathcal{M}}$.

\medskip

The following result first appeared as [K, Theorem 4.7] and is a special case of [WL, Theorem 3.5].

\begin{theorem} \label{thm 2.1}
The Martin boundary of $\{Z_n\}$ is homeomorphic to the Sierpinski gasket $K$ under a canonical homeomorphism.
\end{theorem}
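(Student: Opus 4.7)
My plan is to establish Theorem \ref{thm 2.1} by combining a geometric step (Gromov hyperbolicity of $(X,E)$ together with identification of its hyperbolic boundary with $K$) and a probabilistic step (equality of the Martin and hyperbolic boundaries for the simple random walk).

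First I would verify that $(X,E)$ is hyperbolic in the sense of Gromov. The key geometric observation is that a horizontal edge $({\bf x},{\bf y}) \in E_h$ can only occur when the cells $K_{\bf x}$ and $K_{\bf y}$ meet, and because cells at level $n$ have diameter $2^{-n}$ and the OSC holds, every horizontal chain at level $n$ has length bounded by a constant independent of $n$. Consequently, any two vertices ${\bf x},{\bf y}$ can be joined by a ``canonical'' geodesic consisting of an ascending vertical segment, a short horizontal segment near level $|{\bf x} \wedge {\bf y}|$, and a descending vertical segment. A direct check of the Gromov four-point condition on these canonical paths then yields hyperbolicity with a uniform $\delta$.

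Second I would identify the hyperbolic boundary $\partial_H X$ with $K$. Every geodesic ray from $\vartheta$ descends infinitely and is eventually described by an infinite word $i_1 i_2 \cdots \in \{0,\ldots,d\}^{\mathbb{N}}$, and the address map $\pi: i_1 i_2 \cdots \mapsto \bigcap_n K_{i_1 \cdots i_n}$ is the standard projection onto $K$. Using the horizontal-chain estimate from the previous step, two rays lie within bounded Hausdorff distance in $(X,E)$ if and only if their images agree under $\pi$, so $\pi$ descends to a continuous bijection $\partial_H X \to K$; compactness upgrades it to a homeomorphism.

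Third I would show that the Martin boundary ${\mathcal M}$ of $\{Z_n\}$ coincides with $\partial_H X$. Since the degree function on $(X,E)$ is uniformly bounded above and below, the simple random walk is reversible with bounded reversible ratios, so the walk is strictly reversible in the sense of \cite{[WL]}. I would then invoke the Ancona-type inequality for such walks on hyperbolic graphs: there exists $C>0$ with $G({\bf x},{\bf z}) \leq C\, G({\bf x},{\bf y})\, G({\bf y},{\bf z})$ whenever ${\bf y}$ lies on a geodesic from ${\bf x}$ to ${\bf z}$. This inequality forces $K({\bf x}, \cdot)$ to extend continuously to $\partial_H X$, and forces any two sequences converging to the same hyperbolic boundary point to be Martin-equivalent. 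Composing with the homeomorphism from the second step gives the theorem.

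The main obstacle is the Ancona inequality: the horizontal edges let the walk drift laterally without descending, so one must carefully estimate hitting probabilities of horizontal cut-sets and show that geodesic crossings of such cuts are essentially forced, despite the multiplicity of geodesics generated by horizontal motion. This is exactly the strictly-reversible random walk estimate carried out in \cite{[WL]}, which is why the authors state Theorem~\ref{thm 2.1} as a special case of [WL, Theorem~3.5].
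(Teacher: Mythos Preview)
The paper does not give its own proof of Theorem~\ref{thm 2.1}; it simply quotes the result as \cite[Theorem~4.7]{[K]} and as a special case of \cite[Theorem~3.5]{[WL]}. Your three-step outline (Gromov hyperbolicity of $(X,E)$ via the bounded horizontal-chain property, identification of the hyperbolic boundary with $K$ through the address map, and then the Ancona-type argument for strictly reversible walks to equate the Martin and hyperbolic boundaries) is precisely the route taken in those references, and you already note this yourself in the last paragraph. So your proposal is correct and coincides with the argument the paper is invoking.
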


Hence, we may identify ${\mathcal{M}}$ with $K$ and it makes sense to talk about the limit $Z_{\infty}$ on $K$. Following \cite{[WL]}, we may express the limit in terms of a {\it projection} $\iota: X \rightarrow K$. For any ${\bf x} \in X$, pick an arbitrary point $\iota({\bf x}) \in K_{{\bf x}}$. Then under the usual topology of ${\mathbb{R}}^d$, we have
\begin{eqnarray} \label{eqn 2.2}
Z_{\infty} = \lim_{n \rightarrow \infty} \iota(Z_n).
\end{eqnarray}
For example, when $d = 1$ and $[p_0, p_1] = [0, 1]$, the sequence
\[
\vartheta, 0, 00, 001, 0011, 00111, 001111, ...
\]
converges to $\frac{1}{4}$.

\bigskip

The constructions in this paper involves some technicalities. To illustrate the main ideas, we give a heuristic argument for the case $d = 1$, where $K^1 = [p_0, p_1]$ is simply a unit interval, say $[0, 1]$. First, from the symmetry of $(X, E)$ and $[0, 1]$, we see that $\nu_{\vartheta}$ must be symmetric about $\frac{1}{2}$:
\begin{eqnarray} \label{eqn 3.1}
\nu_{\vartheta}(B) = \nu_{\vartheta}(1 - B) = \nu_{\vartheta}(RB), \ \ \ B \subset [0, 1] \  \text{ Borel}.
\end{eqnarray}
Here $Rx = 1 - x$ is the reflection about $\frac{1}{2}$ and is a symmetry of $[0, 1]$. We call (\ref{eqn 3.1}) a {\it group invariance identity}.

\medskip

Next, we observe that the subgraph $0X = \{0{\bf x}: {\bf x} \in X\}$ is isomorphic to $X$. We will use $\{Z_n\}$, starting at $\vartheta$, to construct a simple random walk $\{\tilde{Z}_k\}$ on $0X$ starting at $0$. The reflection $R$ induces naturally a reflection, also denoted by $R$, on $X$ which flips all the symbols. For example, $R\vartheta = \vartheta$ and $R(00101) = 11010$. Consider the {\it reflected random walk} $\{Z^R_n\}$ defined by
\[
Z^R_n = \left\{
                \begin{array}{ll}
                  Z_n, & \hbox{if $Z_n \in \{\vartheta\} \cup 0X$} \\
                  RZ_n, & \hbox{if $Z_n \in 1X$}
                \end{array}
              \right., \ \ \ n \geq 0.
\]

\begin{figure}[h]
\begin{center}
\includegraphics[width=11.5cm,height=5.5cm]{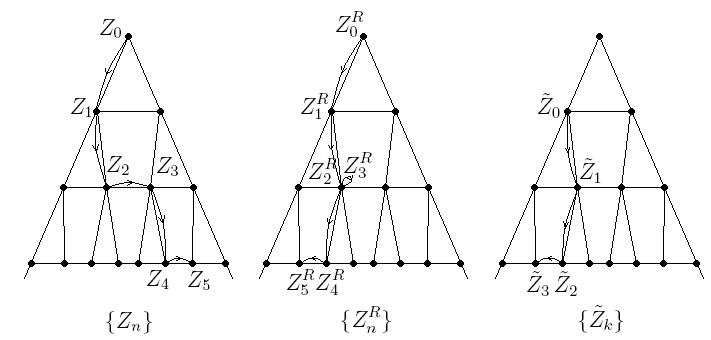}
\caption{The transformation $\{Z_n\} \mapsto \{\tilde{Z}_k\}$}
\end{center}
\end{figure}

Note that $Z^R_n$ always belongs to $\{\vartheta\} \cup 0X$. Now we change time to skip the visits of $Z^R_n$ to $\vartheta$ and then look at the {\it jump chain} (see \cite{[N]}). The resulting process has the form $\tilde{Z}_k = Z^R_{T_k}$ where $\{T_k\}$ are suitable stopping times. It can be verified that $\{\tilde{Z}_k\}$ is the simple random walk on $0X$ starting at $0$.

\medskip

Since $\{\tilde{Z}_k\}$ is a simple random walk on $0X$, it converges to some point $\tilde{Z}_{\infty}$ in $[0, \frac{1}{2}]$. Moreover, $\tilde{Z}_k$ converges to a point $x \in [0, \frac{1}{2}]$ if and only if the original walk $Z_n$ converges to either $x$ or $1 - x = Rx$. It follows that
\[
{\mathbb{P}}_{\vartheta} \{\tilde{Z}_{\infty} \in B\} = {\mathbb{P}}_{\vartheta} \{Z_{\infty} \in B \cup RB\}, \ \ \ B \subset [0, \frac{1}{2}].
\]

\begin{figure}[h]
\begin{center}
\includegraphics[width=5.5cm,height=7cm]{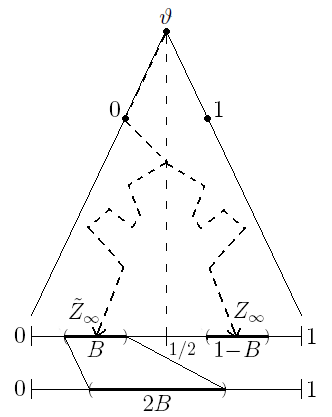}
\caption{Graphical illustration of the identities}
\end{center}
\end{figure}

On the other hand, $[0, \frac{1}{2}]$ can be identified naturally with $[0, 1]$, and we get ${\mathbb{P}}_{\vartheta}\{\tilde{Z}_{\infty} \in B\} = {\mathbb{P}}_{\vartheta}\{Z_{\infty} \in 2B\}= \nu_{\vartheta}(2B)$. Hence we obtain the following \textit{self-similar identity}:
\begin{eqnarray} \label{eqn 3.2}
\nu_{\vartheta}(2B) = \nu_{\vartheta}(B \cup RB), \ \ \ B \subset [0, \frac{1}{2}]  \  \text{ Borel}.
\end{eqnarray}
Now (\ref{eqn 3.1}) and (\ref{eqn 3.2}) imply that $\nu_{\vartheta}$ is the Lebesgue measure on $[0, 1]$. One way to prove this is to show inductively that $\nu_{\vartheta}(I)$ equals the Lebesgue measure of $I$ where $I$ is any dyadic interval.

\medskip

We will now generalize this idea to the Sierpinski gasket of abitrary dimension. We begin by analyzing the symmetries of the $d$-dimensional Sierpinski gasket $K^d$ in relation to the augmented rooted tree $(X, E)$.
\end{section}

\bigskip
\begin{section}{Symmetries and group invariance identity}
Let ${\mathcal{A}}_d$ be the symmetry group of the $d$-dimensional Sierpinski gasket $K = K^d \subset {\mathbb{R}}^d$, i.e., ${\mathcal{A}}_d$ consists of those isometries on ${\mathbb{R}}^d$ that fix $K$. The next lemma is standard.

\begin{Lem} \label{thm 4.1}
${\mathcal{A}}_d$ is isomorphic to the symmetric group ${\mathcal{S}}_{d+1}$ of $d+1$ symbols. For each $g \in {\mathcal{A}}_d$, define $g_K: \{0, ..., d\} \rightarrow \{0, ..., d\}$ by
\[
g_Ki = j \ \ \text{ if } gp_i = p_j.
\]
Then $g_K \in S_{d+1}$, and $g \mapsto g_K$ is an isomorphism. We will identify $g$ and $g_K$. Moreover, the transformation $R_{ij} \in {\mathcal{A}}_d$ corresponding to the transposition $(i, j)$ in ${\mathcal{S}}_{d+1}$ is a reflection in ${\mathbb{R}}^d$. By convention, we set $R_{ii} = id$.
\end{Lem}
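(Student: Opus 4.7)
The plan is to construct the map $g \mapsto g_K$ in the natural way, verify it is a well-defined group homomorphism $\mathcal{A}_d \to \mathcal{S}_{d+1}$, establish bijectivity in two steps, and then identify transpositions with reflections.

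For well-definedness, every $g \in \mathcal{A}_d$ must permute the vertex set $\{p_0, \ldots, p_d\}$. The cleanest argument is that these vertices are intrinsically distinguished as the extreme points of the convex hull $\mathrm{conv}(K)$, which coincides with the regular $d$-simplex spanned by the $p_i$. Since any $g \in \mathcal{A}_d$ sends $K$ to $K$, it sends $\mathrm{conv}(K)$ to itself and therefore permutes its extreme points. The homomorphism property $(g_1 g_2)_K = (g_1)_K (g_2)_K$ is then immediate from the definition.

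Injectivity follows because the $d+1$ points $p_i$ are affinely independent, and an isometry of $\mathbb{R}^d$ is uniquely determined by its action on any $d+1$ affinely independent points. For surjectivity, given $\sigma \in \mathcal{S}_{d+1}$, the affine extension of the map $p_i \mapsto p_{\sigma(i)}$ is an isometry $g_\sigma$ because the regularity of the simplex means $|p_i - p_j|$ depends only on whether $i = j$. The key point is to verify that $g_\sigma$ fixes $K$: writing $g_\sigma(x) = Ax + b$ with $A$ orthogonal, a direct computation shows $g_\sigma \circ F_i \circ g_\sigma^{-1} = F_{\sigma(i)}$, since both sides are contractions by $1/2$ with the same fixed point $p_{\sigma(i)}$. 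Consequently $g_\sigma(K) = \bigcup_i F_{\sigma(i)}(g_\sigma K)$, so $g_\sigma(K)$ is a fixed point of the Hutchinson operator of the IFS $\{F_i\}$, and by uniqueness equals $K$.

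For the last claim, the transposition $(i,j) \in \mathcal{S}_{d+1}$ corresponds to the unique isometry swapping $p_i$ with $p_j$ and fixing every other $p_k$. In a regular simplex the affine hyperplane $H_{ij}$ of points equidistant from $p_i$ and $p_j$ contains every $p_k$ with $k \notin \{i, j\}$, so the Euclidean reflection across $H_{ij}$ realizes this permutation and, by uniqueness, equals $R_{ij}$. The main obstacle is the surjectivity step, specifically the verification that $g_\sigma$ preserves $K$; everything else reduces to standard facts about the affine geometry of the regular simplex.
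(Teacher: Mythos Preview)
Your proof is correct. The paper itself does not prove this lemma: it simply introduces it with ``The next lemma is standard'' and moves on. Your argument supplies precisely the standard verification the paper omits. The only substantive step is surjectivity, where you must check that the isometry $g_\sigma$ induced by a permutation of the vertices actually preserves $K$; your conjugation argument $g_\sigma \circ F_i \circ g_\sigma^{-1} = F_{\sigma(i)}$ followed by uniqueness of the attractor is the clean way to do this. The identification of transpositions with reflections across perpendicular bisectors is likewise the expected argument. There is nothing to compare against in the paper, and nothing to fix in your proposal.
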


\begin{Example} We illustrate the reflection $R_{01}$ on $K^d$ where $d \leq 3$.
\begin{figure}[h]
\begin{center}
\includegraphics[width=11.5cm,height=4.5cm]{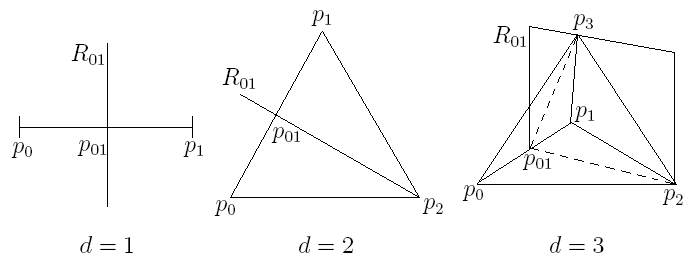}
\caption{The reflection $R_{01}$}
\end{center}
\end{figure}
\end{Example}

We let $\mathrm{Aut}(X)$ be the {\it automorphism group} of $(X, E)$, i.e.
\[
\mathrm{Aut}(X) = \{\varphi: X \rightarrow X | \varphi \text{ is bijective and } {\bf x} \sim {\bf y} \Leftrightarrow \varphi({\bf x}) \sim \varphi({\bf y})\}.
\]
Each $g \in {\mathcal{A}}_d$ also induces an element of $\mathrm{Aut}(X)$ via its action on the cells $K_{{\bf x}}$.

\begin{Lem} \label{thm 4.2}
For $g \in {\mathcal{A}}_d$, define $g_X: X \rightarrow X$ by
\[
g_X{\bf x} = {\bf y}, \ \ \text{if } g(K_{{\bf x}}) = K_{{\bf y}}.
\]
Then $g_X \in \mathrm{Aut}(X)$, and $g \mapsto g_X$ is an injective homomorphism. We will identify $g$ and $g_X$. Moreover, $P$ is invariant under $g$: for all $g \in {\mathcal{A}}_d$
 and ${\bf x}, {\bf y} \in X$,
\[
P({\bf x}, {\bf y}) = P(g{\bf x}, g{\bf y}).
\]
\end{Lem}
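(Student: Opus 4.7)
The plan is to reduce everything to a single algebraic identity relating $g$ to the IFS maps $F_i$, and then derive all the structural claims formally. First I would establish the key conjugation identity: for each $i \in \{0, \ldots, d\}$,
\[
g \circ F_i \circ g^{-1} = F_{g_K i}.
\]
Both sides are similarities of ${\mathbb{R}}^d$ with contraction ratio $\tfrac{1}{2}$, and both fix the point $g(p_i) = p_{g_K i}$; since a contractive similarity is uniquely determined by its ratio and fixed point, equality follows. Iterating gives, for ${\bf x} = i_1 \cdots i_n$,
\[
g \circ F_{{\bf x}} = F_{\widetilde{g}({\bf x})} \circ g, \qquad \widetilde{g}({\bf x}) := (g_K i_1)(g_K i_2) \cdots (g_K i_n).
\]
Applying both sides to $K$, which is fixed by $g$, yields $g(K_{{\bf x}}) = K_{\widetilde{g}({\bf x})}$. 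This shows that $g_X$ is well-defined, that it fixes the root $\vartheta$, and that on $X \setminus \{\vartheta\}$ it is exactly the symbolwise permutation induced by $g_K$.

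Next I would verify $g_X \in \mathrm{Aut}(X)$. Bijectivity is immediate, with two-sided inverse $(g^{-1})_X$. The symbolwise description shows $g_X$ preserves word length and commutes with the parent operation ${\bf x} \mapsto {\bf x}^-$, so vertical edges are preserved. For horizontal edges, since $g$ is a homeomorphism of ${\mathbb{R}}^d$,
\[
K_{{\bf x}} \cap K_{{\bf y}} \neq \emptyset \ \Leftrightarrow \ g(K_{{\bf x}}) \cap g(K_{{\bf y}}) = K_{g_X {\bf x}} \cap K_{g_X {\bf y}} \neq \emptyset,
\]
which gives ${\bf x} \sim_h {\bf y} \Leftrightarrow g_X {\bf x} \sim_h g_X {\bf y}$. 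The homomorphism property $(gh)_X = g_X \circ h_X$ is obtained by applying the defining relation twice: $(gh)(K_{{\bf x}}) = g(K_{h_X {\bf x}}) = K_{g_X h_X {\bf x}}$. For injectivity, if $g_X = \mathrm{id}$ then in particular $g_K i = i$ for every $i$, so $g$ fixes each vertex $p_i$; since $\{p_i\}_{i=0}^d$ is affinely independent in ${\mathbb{R}}^d$ and $g$ is an isometry, this forces $g = \mathrm{id}$.

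Finally, invariance of the transition function is a direct consequence. Because $g_X$ is a graph automorphism it sends neighbors of ${\bf x}$ bijectively to neighbors of $g_X {\bf x}$, so $\mathrm{deg}(g_X {\bf x}) = \mathrm{deg}({\bf x})$ and ${\bf x} \sim {\bf y} \Leftrightarrow g_X {\bf x} \sim g_X {\bf y}$; thus $P(g{\bf x}, g{\bf y}) = P({\bf x}, {\bf y})$. The only genuinely geometric step, and the main potential obstacle, is the conjugation identity $g \circ F_i \circ g^{-1} = F_{g_K i}$: this is where the simplicial symmetry of $K$ and the uniformity of the contraction ratios really enter. Once it is in hand, the rest of the lemma unwinds as formal bookkeeping on the symbolic space $X$.
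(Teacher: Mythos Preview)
Your argument is correct in outline and considerably more detailed than the paper's own proof, which simply declares the first part ``standard'' and then verifies $P$-invariance exactly as you do (degree is preserved by an automorphism, so $P({\bf x},{\bf y}) = 1/\deg({\bf x}) = 1/\deg(g{\bf x}) = P(g{\bf x},g{\bf y})$ when ${\bf x}\sim{\bf y}$, and both sides vanish otherwise).

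One point needs tightening. You justify the conjugation identity $g\circ F_i\circ g^{-1} = F_{g_Ki}$ by saying that a contractive similarity is uniquely determined by its ratio and fixed point. That is false for $d\ge 2$: any map $x\mapsto \tfrac12 O(x-p)+p$ with $O$ orthogonal has ratio $\tfrac12$ and fixed point $p$. What is true is that a \emph{homothety} is determined by its ratio and center. The fix is immediate: the linear part of $F_i$ is $\tfrac12 I$, and conjugating an affine map by an affine isometry preserves the linear part up to orthogonal conjugation, so the linear part of $g\circ F_i\circ g^{-1}$ is still $\tfrac12 I$; hence it is a homothety, and now ratio and fixed point do determine it. Alternatively, just compute directly: writing $g(x)=Ax+b$ with $A$ orthogonal, one has $g\circ F_i\circ g^{-1}(x) = \tfrac12 g(p_i) + \tfrac12 x = F_{g_Ki}(x)$. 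With this correction the rest of your proof goes through without change.
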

\begin{proof}
The first part is standard. To see the second part, note that if ${\bf x} \sim {\bf y}$, then $g{\bf x} \sim g{\bf y}$ and
\[
P({\bf x}, {\bf y}) = \frac{1}{{\rm deg}({\bf x})} = \frac{1}{{\rm deg}(g {\bf x})} = P(g{\bf x}, g{\bf y}).
\]
And if ${\bf x} \nsim {\bf y}$ then both sides are $0$.
\end{proof}

The next lemma says that $g \in {\mathcal{A}}_d$ and $g \in \mathrm{Aut}(X)$ commute in the limit.

\begin{Lem} \label{thm 4.3}
For any Borel set $B$ in $K$ and $g \in {\mathcal{A}}_d$,
\[
\{Z_{\infty} \in B\} = \{g Z_{\infty} \in gB\} = \{\lim_{n \rightarrow \infty} (gZ_n) \in gB\}.
\]
\end{Lem}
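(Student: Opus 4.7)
The claim splits cleanly into two equalities. The first one, $\{Z_\infty \in B\} = \{gZ_\infty \in gB\}$, is a pointwise tautology: since $g \in \mathcal{A}_d$ is an isometry of $\mathbb{R}^d$ (in particular a bijection of $K$), for every sample point $\omega$ one has $Z_\infty(\omega) \in B \iff gZ_\infty(\omega) \in gB$. So there is nothing to check here beyond recalling Lemma \ref{thm 4.1}.

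The content of the lemma lies in the second equality, $\{gZ_\infty \in gB\} = \{\lim_n gZ_n \in gB\}$, where the left-hand side uses $g$ as a map $K \to K$ and the right-hand side uses $g$ as an automorphism of $(X,E)$ via Lemma \ref{thm 4.2}. The natural plan is to show directly that, almost surely, $\lim_n gZ_n = gZ_\infty$ in the sense of (\ref{eqn 2.2}), i.e., $\lim_n \iota(gZ_n) = gZ_\infty$ in $\mathbb{R}^d$; the set-theoretic equality of events then follows.

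To carry this out I would first record the key compatibility between the two actions of $g$: since $g(K_{\bf x}) = K_{g{\bf x}}$ (definition of $g_X$) and $g$ is an isometry, $|g{\bf x}| = |{\bf x}|$, and for any ${\bf x} \in X$ both points $\iota(g{\bf x})$ and $g\iota({\bf x})$ lie in the cell $K_{g{\bf x}}$, which has Euclidean diameter $2^{-|{\bf x}|}$. Hence
\[
|\iota(g{\bf x}) - g\iota({\bf x})| \leq 2^{-|{\bf x}|}.
\]
Applying this along the trajectory and using transience of $\{Z_n\}$ (which gives $|Z_n| \to \infty$ almost surely, as is built into the identification of $\mathcal{M}$ with $K$), the error $|\iota(gZ_n) - g\iota(Z_n)|$ tends to $0$ a.s. Meanwhile, $\iota(Z_n) \to Z_\infty$ by (\ref{eqn 2.2}), and $g$ is continuous on $\mathbb{R}^d$, so $g\iota(Z_n) \to gZ_\infty$. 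Combining these two facts yields $\iota(gZ_n) \to gZ_\infty$ a.s., which is the desired identity $\lim_n gZ_n = gZ_\infty$ modulo a null set, and hence equality of the two events.

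There is no real obstacle; the only point requiring care is the verification that the two incarnations of $g$ (as an isometry of $K$ and as an automorphism of $X$) are compatible in the limit, which is exactly the level-preservation plus diameter bound above. Everything else is continuity of an isometry and almost sure escape of the simple random walk to the boundary, both of which are already in place.
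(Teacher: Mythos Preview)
Your proposal is correct and follows essentially the same route as the paper's proof: both argue that $\iota(g{\bf x})$ and $g\iota({\bf x})$ lie in the common cell $K_{g{\bf x}} = g(K_{\bf x})$, use the shrinking diameter together with $|Z_n|\to\infty$ to get $|\iota(gZ_n) - g\iota(Z_n)|\to 0$, and then combine with continuity of $g$ and $\iota(Z_n)\to Z_\infty$ via the triangle inequality. Your write-up is slightly more explicit (separating out the tautological first equality and quantifying the diameter as $2^{-|{\bf x}|}$), but the argument is the same.
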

\begin{proof}
Let $g \in {\mathcal{A}}_d$ be given. By Lemma \ref{thm 4.2}, $\iota(g{\bf x}) \in K_{g{\bf x}} = g(K_{{\bf x}})$ for all ${\bf x} \in X$. It follows that
\[
|\iota(gZ_n) - g\iota(Z_n)| \leq \mathrm{diam}(K_{Z_n}) \rightarrow 0
\]
as $|Z_n| \rightarrow \infty$. Suppose $Z_{\infty} = y$. Then $\iota(Z_n) \rightarrow y$ and $|Z_n| \rightarrow \infty$ imply that
\[
|\iota(gZ_n) - gy| \leq |\iota(gZ_n) - g\iota(Z_n)| + |g\iota(Z_n) - gy| \rightarrow 0
\]
as $n \rightarrow \infty$. It follows that $gZ_n \rightarrow gy$. Similarly, $\lim_n gZ_n = gy$ implies that $Z_{\infty} = y$.
\end{proof}

\begin{theorem} \label{thm 4.4} {\bf (Group invariance identities)}
The hitting distribution $\nu_{\vartheta}(\cdot) = {\mathbb{P}}_{\vartheta}\{Z_{\infty} \in \cdot\}$ is invariant under the action of ${\mathcal{A}}_d$, i.e., for any Borel set $B$ in $K$,
\[
\nu_{\vartheta}(B) = \nu_{\vartheta}(gB), \ \ \ g \in {\mathcal{A}}_d.
\]
\end{theorem}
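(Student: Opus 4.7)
The plan is to deduce the invariance identity directly from Lemmas \ref{thm 4.2} and \ref{thm 4.3}, which were tailored precisely for this purpose. The crucial observation is that the root $\vartheta$ is a fixed point of every $g \in {\mathcal{A}}_d$: since $g$ permutes cells of each fixed generation and $K_{\vartheta} = K$ is the only cell of generation zero, we must have $g\vartheta = \vartheta$.

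First, I would observe that the push-forward process $\{gZ_n\}_{n \geq 0}$ is itself a Markov chain with transition function $P$. This is immediate from the invariance $P({\bf x}, {\bf y}) = P(g{\bf x}, g{\bf y})$ established in Lemma \ref{thm 4.2}. Because $gZ_0 = g\vartheta = \vartheta = Z_0$, the law of $\{gZ_n\}$ under ${\mathbb{P}}_{\vartheta}$ coincides with the law of $\{Z_n\}$ under ${\mathbb{P}}_{\vartheta}$. Consequently $\lim_n gZ_n$ and $Z_{\infty}$ have the same distribution under ${\mathbb{P}}_{\vartheta}$, so that for any Borel set $C \subset K$,
$$
{\mathbb{P}}_{\vartheta}\{\lim_{n \to \infty} gZ_n \in C\} = \nu_{\vartheta}(C).
$$

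Second, Lemma \ref{thm 4.3} rewrites the event $\{Z_{\infty} \in B\}$ as $\{\lim_n gZ_n \in gB\}$. Taking ${\mathbb{P}}_{\vartheta}$-probabilities and applying the previous display with $C = gB$ gives
$$
\nu_{\vartheta}(B) = {\mathbb{P}}_{\vartheta}\{\lim_{n \to \infty} gZ_n \in gB\} = \nu_{\vartheta}(gB),
$$
which is exactly the desired group invariance identity.

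I do not anticipate any real obstacle, since the two preceding lemmas have already absorbed the genuinely nontrivial content, namely that $g$ preserves the edge structure (hence $P$ and hence the law of the walk starting from a fixed point of $g$) and that $g$ commutes with the Martin-boundary limit. The only point worth checking carefully is that $g\vartheta = \vartheta$, and this is immediate from the cell-level description of the action of ${\mathcal{A}}_d$ on $X$.
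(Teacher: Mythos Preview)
Your proposal is correct and follows essentially the same approach as the paper: use Lemma~\ref{thm 4.2} (together with $g\vartheta=\vartheta$) to conclude that $\{gZ_n\}$ and $\{Z_n\}$ have the same law under ${\mathbb{P}}_{\vartheta}$, then invoke Lemma~\ref{thm 4.3} to pass to the limit. The only cosmetic difference is that the paper writes the chain of equalities starting from $\nu_{\vartheta}(gB)$ and applies Lemma~\ref{thm 4.3} with $g^{-1}$, whereas you start from $\nu_{\vartheta}(B)$ and apply it with $g$; the content is identical.
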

\begin{proof}
Fix $g \in {\mathcal{A}}_d$. By Lemma \ref{thm 4.2}, the processes $\{gZ_n\}$ and $\{Z_n\}$ have the same finite dimensional distributions under ${\mathbb{P}}_{\vartheta}$. It follows by a monotone class argument that $\lim_{n \rightarrow \infty} gZ_n$ and $Z_{\infty}$ have the same distribution under ${\mathbb{P}}_{\vartheta}$ as well.

\medskip

Let $B$ be a Borel set in $K$. Then
\begin{eqnarray}
\nu_{\vartheta}(gB) &=& {\mathbb{P}}_{\vartheta}\{ Z_{\infty} \in gB\} \nonumber \\
                    &=& {\mathbb{P}}_{\vartheta}\{g^{-1}Z_{\infty} \in B\} \nonumber \\
                    &=& {\mathbb{P}}_{\vartheta}\{\lim_{n \rightarrow \infty} (g^{-1}Z_n) \in B\} \label{eqn 4.5}\\
                    &=& {\mathbb{P}}_{\vartheta}\{Z_{\infty} \in B\} \label{eqn 4.6}\\
                    &=& \nu_{\vartheta}(B) \nonumber.
\end{eqnarray}

In the above, (\ref{eqn 4.5}) follows from Lemma \ref{thm 4.3}, and (\ref{eqn 4.6}) follows from the fact that $\lim_{n \rightarrow \infty} gZ_n$ and $Z_{\infty}$ have the same distribution.
\end{proof}

\end{section}

\bigskip
\begin{section}{Reflection principle}
In this section we construct the process $\{\tilde{Z}_k\}$ by repeated reflections and show that it is a simple random walk on $0X$. We use $\Omega$ to denote the sample space consisting of all sample paths $\omega$ such that $Z_{n+1}(\omega) \sim Z_n(\omega)$ for all $n$ (note that the simple random walk is of the nearest-neighbor type). For simplicity, we restrict the starting point to be $\vartheta$, and we will work under ${\mathbb{P}}_{\vartheta}$. With slight modification the starting point can be arbitrary.

\medskip

First we introduce some notations. For ${\bf x} \in X \setminus \{\vartheta\}$, we define the {\it parity} $[{\bf x}]$ of ${\bf x}$ as the first symbol of ${\bf x}$. That is, if ${\bf x} = i_1i_2...i_n$, then $[{\bf x}] = i_1$. By convention, we set $[\vartheta] = 0$. For ${\bf x} \in X$, we define a set ${\mathcal{N}}_{{\bf x}}$ by
\[
{\mathcal{N}}_{{\bf x}} = \left\{
              \begin{array}{ll}
                \{j: j \neq i\}, & \hbox{${\bf x} = i$,}\\
                \{ji^{m-1}\}, & \hbox{${\bf x} = ij^{m-1}, i \neq j, \ m \geq 2$,} \\
                \emptyset, & \hbox{otherwise.}
              \end{array}
            \right.
\]
In words, ${\bf y} \in {\mathcal{N}}_{{\bf x}}$ if the parity changes when the walk jumps from ${\bf x}$ to ${\bf y}$ horizontally (${\mathcal{N}}$ stands for {\it neighbor}).

\medskip

For technical convenience, we will first change the time of $\{Z_n\}$. Let $\{{\mathcal{F}}_n\}$ be the standard filtration of $\{Z_n\}$. We define a strictly increasing sequence of $\{{\mathcal{F}}_n\}$-stopping times $\{T_k\}_{k \geq 0}$ by
\begin{eqnarray*}
T_0     &=& \inf\{n \geq 0: Z_n \neq \vartheta\},\\
T_{k+1} &=& \inf\{n > T_k: Z_n \notin \{Z_{T_k}\} \cup \{\vartheta\} \cup {\mathcal{N}}_{Z_{T_k}} \}, \ \ \ k \geq 0.
\end{eqnarray*}
Note that ${\mathbb{P}}_{\vartheta}\{T_0 = 1\} = 1$. By the strong Markov property, the process $\{Y_k\}_{k \geq 0}$ defined by
\[
Y_k = Z_{T_k}, \ \ \ k \geq 0,
\]
is a Markov chain on $X \setminus \{\vartheta\}$ with respect to $\{{\mathcal{G}}_k\}_{k \geq 0}$, where ${\mathcal{G}}_k = \sigma(Z_{T_0}, ..., Z_{T_k})$. From the construction, we see that if $[Y_{k+1}] = j \neq i = [Y_k]$, then $R_{ij}Y_{k+1} \neq Y_k$. (This ensures that the reflected process does not stay. See Example \ref{eg 4.2}.)

\medskip

The transition function of $\{Y_k\}$ is given in the next lemma, which is the main ingredient of the proof of Proposition \ref{thm 5.2}.

\begin{Lem} \label{thm 5.1}
Let ${\bf x} \in X \setminus \{ \vartheta \}$.
\begin{enumerate}
\item[(i)] Suppose ${\bf x}$ is not of the form $ij^{m-1}$, where $i \neq j$ and $m \geq 1$, and ${\bf y} \sim {\bf x}$. Then
\[
{\mathbb{P}}_{\vartheta}\{Y_{k+1} = {\bf y} | Y_k = {\bf x}\} = \frac{1}{\mathrm{deg}({\bf x})}.
\]
\item[(ii)] Suppose ${\bf x} = i$ for some $i$. Let $a \in \{0, ..., d\}$. Then
\[
{\mathbb{P}}_{\vartheta}\{Y_{k+1} \in \{R_{ij}({\bf x}a): j = 0, ..., d\} |Y_k = {\bf x}\}  = \frac{1}{d + 1}.
\]
\item[(iii)] Suppose ${\bf x} = ij^{m-1}$, $i \neq j$ and $m \geq 2$. Let ${\bf y} \in iX$ with ${\bf y} \sim {\bf x}$. Then
\[
{\mathbb{P}}_{\vartheta}\{Y_{k+1} \in \{{\bf y}, R_{ij}{\bf y}\} | Y_k = {\bf x}\} = \frac{1}{\mathrm{deg}({\bf x}) - 1}.
\]
\end{enumerate}
\end{Lem}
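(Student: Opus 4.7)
My plan is to treat the three cases uniformly by examining the ``forbidden set'' $\{{\bf x}, \vartheta\} \cup \mathcal{N}_{{\bf x}}$ and the one-step behavior of $\{Z_n\}$ at ${\bf x}$ and at its forbidden neighbors. Since $\vartheta$ is adjacent to ${\bf x}$ only when $|{\bf x}| = 1$, and $\mathcal{N}_{{\bf x}}$ is nonempty only in the two special forms appearing in the lemma, the three cases correspond to three disjoint combinatorial regimes: (i) no neighbor of ${\bf x}$ is forbidden; (ii) $\vartheta$ and all length-one words are forbidden; (iii) exactly one neighbor of ${\bf x}$, namely $R_{ij}{\bf x} = ji^{m-1}$, is forbidden.

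For case (i), the stopping time collapses: $T_{k+1} = T_k + 1$ almost surely, so $Y_{k+1}$ is simply the one-step jump $Z_{T_k+1}$, uniform over the $\deg({\bf x})$ neighbors. For case (ii), the walk must first exit $\mathcal{F} := \{\vartheta, 0, 1, \ldots, d\}$ to a length-two word. At any length-one state $j$, the $d+1$ children $\{ja : a = 0, \ldots, d\}$ are the only neighbors outside $\mathcal{F}$, so by the strong Markov property the conditional distribution of the exit child given that exit occurs from $j$ is uniform. Setting $p_i(j) := \mathbb{P}_i\{\text{exit from }\mathcal{F}\text{ occurs from the length-one state } j\}$, this gives
\[
\mathbb{P}_i\{Y_{k+1} = ja\} = \frac{p_i(j)}{d+1}.
\]
The key observation is that, for any fixed $a$, the set $\{R_{ij}(ia) : j = 0, \ldots, d\}$ contains exactly one child of each length-one word $j$ (because $R_{ij}(ia)$ has first symbol $R_{ij}(i) = j$); summing over $j$ therefore telescopes to $\sum_j p_i(j)/(d+1) = 1/(d+1)$, bypassing any explicit computation of $p_i(j)$.

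Case (iii) is the main step. Set ${\bf x}' := R_{ij}{\bf x} = ji^{m-1}$ and let $\mu_{{\bf x}}$, $\mu_{{\bf x}'}$ denote the exit distributions from $\mathcal{F} := \{{\bf x}, \vartheta, {\bf x}'\}$ starting at ${\bf x}$ and at ${\bf x}'$, respectively. Since $m \geq 2$, $\vartheta$ is adjacent to neither ${\bf x}$ nor ${\bf x}'$, so ${\bf x}'$ is the unique forbidden neighbor of ${\bf x}$ and vice versa. A one-step decomposition gives
\[
\mu_{{\bf x}}({\bf z}) = \frac{1}{\deg({\bf x})}\,\mathbf{1}_{G({\bf x})}({\bf z}) + \frac{1}{\deg({\bf x})}\,\mu_{{\bf x}'}({\bf z}),
\]
where $G({\bf x})$ is the set of good (non-forbidden) neighbors of ${\bf x}$, with $|G({\bf x})| = \deg({\bf x}) - 1$. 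The $R_{ij}$-invariance of $\{Z_n\}$ (Lemma \ref{thm 4.2}) and of $\mathcal{F}$ yields $\mu_{{\bf x}'}({\bf z}) = \mu_{{\bf x}}(R_{ij}{\bf z})$. Applying the one-step relation to both ${\bf z}$ and $R_{ij}{\bf z}$ and adding, with $R_{ij}^2 = \mathrm{id}$, gives the closed form
\[
\mu_{{\bf x}}({\bf z}) + \mu_{{\bf x}}(R_{ij}{\bf z}) = \frac{1}{\deg({\bf x}) - 1}\bigl(\mathbf{1}_{G({\bf x})}({\bf z}) + \mathbf{1}_{G({\bf x})}(R_{ij}{\bf z})\bigr).
\]
For ${\bf y} \in iX$ with ${\bf y} \sim {\bf x}$, we have ${\bf y} \in G({\bf x})$, while $R_{ij}{\bf y} \in jX$ is not a neighbor of ${\bf x}$ at all (the unique $jX$-neighbor of ${\bf x}$ is ${\bf x}'$, and $R_{ij}{\bf y} \neq {\bf x}'$ since ${\bf y} \neq {\bf x}$); hence the right side reduces to $1/(\deg({\bf x}) - 1)$.

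The main obstacle is precisely the bouncing between ${\bf x}$ and ${\bf x}'$ in case (iii): computing a single $\mu_{{\bf x}}({\bf y})$ directly would require summing a geometric series over the number of bounces and tracking two exit distributions at once. The reflection trick circumvents this by collapsing $\mu_{{\bf x}'}$ to the $R_{ij}$-push of $\mu_{{\bf x}}$, and the lemma is stated in terms of exactly the pair-sum $\mu_{{\bf x}}({\bf y}) + \mu_{{\bf x}}(R_{ij}{\bf y})$ that the symmetric one-step equation produces in closed form. The only remaining combinatorial input is that ${\bf x}' = ji^{m-1}$ is the unique $jX$-neighbor of ${\bf x} \in iX$, which holds because level-$m$ cells in distinct branches $iX$ and $jX$ can meet only at the corner $p_{ij}$, shared precisely by $F_{ij^{m-1}}(K)$ and $F_{ji^{m-1}}(K)$.
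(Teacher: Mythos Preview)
Your argument is correct, and for parts (ii) and (iii) it takes a genuinely different route from the paper.

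For (i) your proof coincides with the paper's: since $\mathcal{N}_{\bf x} = \emptyset$ and $\vartheta \nsim {\bf x}$, the stopping time collapses to $T_{k+1} = T_k + 1$.

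For (ii) the paper sets $q_{\bf z} = \mathbb{P}_{\bf z}\{Z_{\tau_2} \in B\}$ for ${\bf z} \in \{\vartheta, 0, \ldots, d\}$, writes down the first-step linear system, and solves it (using that all $q_j$ coincide by symmetry) to get $q_j = 1/(d+1)$. Your approach avoids solving any system: you observe that, whatever the last level-one state $j$ before exit may be, the exit child is uniform over $\{ja : a\}$ (this follows from decomposing $\{Z_\sigma = ja\}$ over the exit time and using that $P(j, ja)$ is independent of $a$), and that the target set $B = \{R_{ij}(ia) : j\}$ contains exactly one child of each $j$. Summing gives $\sum_j p_i(j)/(d+1) = 1/(d+1)$ without ever computing $p_i(j)$. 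This is cleaner and explains structurally why the particular sets appearing in the lemma, rather than individual points, have the stated probabilities.

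For (iii) the paper simply says the proof is ``similar to that of (ii)'', meaning another first-step system in the two unknowns $\mu_{\bf x}$, $\mu_{{\bf x}'}$. Your reflection argument makes this explicit and more conceptual: the $R_{ij}$-invariance of $P$ and of the forbidden set $\{{\bf x}, \vartheta, {\bf x}'\}$ gives $\mu_{{\bf x}'} = \mu_{\bf x} \circ R_{ij}$, and adding the one-step relation to its $R_{ij}$-image yields the pair sum $\mu_{\bf x}({\bf y}) + \mu_{\bf x}(R_{ij}{\bf y})$ in closed form. The combinatorial check that $R_{ij}{\bf y}$ is never a neighbor of ${\bf x}$ (because ${\bf x}' = ji^{m-1}$ is the unique $jX$-neighbor of ${\bf x}$, and $R_{ij}{\bf y} = {\bf x}'$ would force ${\bf y} = {\bf x}$) is correct and is the one place where the cell geometry of $K$ enters. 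Your approach also makes transparent why the lemma is phrased for the pair $\{{\bf y}, R_{ij}{\bf y}\}$ rather than for ${\bf y}$ alone: it is exactly the symmetrized quantity that the reflection trick computes without summing the bounce series.

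Both approaches work; yours trades a small linear-algebra computation for a symmetry argument that foreshadows the reflection principle used later in the paper. One cosmetic remark: in (ii) the sum $\sum_j p_i(j)/(d+1)$ does not ``telescope'' so much as collapse because $\sum_j p_i(j) = 1$; the point is well taken regardless.
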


\begin{figure}[h]
\begin{center}
\includegraphics[width=11.5cm,height=4.5cm]{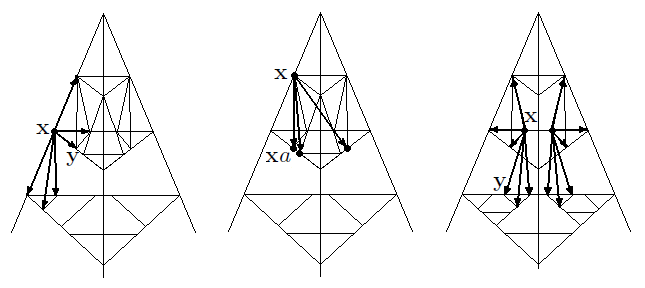}
\caption{Illustration of Lemma \ref{thm 5.1} (i) (left), (ii) (middle) and (iii) (right)}
\end{center}
\end{figure}

\begin{proof}
To prove (i), we assume ${\bf x}$ is not of the form $ij^{m-1}$. Then on the event $\{Y_k = {\bf x}\}$, we have $T_{k+1} = T_k + 1$, and so $Y_{k+1} = Z_{T_k + 1}$. It follows from the strong Markov property that
\[
{\mathbb{P}}_{\vartheta}\{Y_{k+1} = {\bf y} | Y_k = {\bf x}\} = {\mathbb{P}}_{\vartheta}\{Z_{{T_k}+1} = {\bf y} | Z_{T_k} = {\bf x}\} = \frac{1}{\mathrm{deg}({\bf x})}.
\]
For (ii), let $B = \{R_{ij}({\bf x}a): j = 0, ..., d\}$, $\tau_2 = \inf \{n \geq 0: |Z_n| = 2\}$ and $q_{{\bf z}} = {\mathbb{P}}_{{\bf z}}\{Z_{\tau_2} \in B\}$. Then ${\mathbb{P}}_{\vartheta}\{Y_{k+1} \in B | Y_k = {\bf x}\} = q_{{\bf x}}$. By standard first-step calculations, we have
\begin{eqnarray*}
q_{\vartheta} &=& \sum_{j = 0}^d \frac{1}{d+1}q_j,\\
q_j      &=& \frac{1}{2d+2}1 + \frac{1}{2d+2}q_{\vartheta} + \sum_{l = 0, l \neq j}^d \frac{1}{2d+2}q_l, \ \ \  j = 0, ..., d.
\end{eqnarray*}
Note that ${\mathrm{deg}}(j) = 1 + d + (d+1) = 2d+2$. Solving the equations, we get $q_j = q_{\vartheta} = \frac{1}{d+1}$ for all $j$.

\noindent
The proof of (iii) is similar to that of (ii).
\end{proof}

To define the reflected process $\{\tilde{Z}_k\}$, we need to keep track of the parity changes of $\{Y_k\}$. We define a strictly increasing sequence $\{S_p\}_{p \geq 0}$ of $\{{\mathcal{G}}_m\}$-stopping times by $S_0 = 0$,
\[
S_{p + 1} = \inf\{m > S_p: [Y_m] \neq [Y_{S_p}]\}, \ \ \ p \geq 0.
\]
Also define a random sequence $\{G_p\}$ in ${\mathcal{A}}_d$ by $G_0 = R_{0, [Y_0]}$,
\[
G_{p+1} = G_p \circ R_{[Y_{S_p}], [Y_{S_{p+1}}]}, \ \ \ p \geq 0.
\]
Hence
\[
G_p = R_{0, [Y_0]} \circ R_{[Y_0], [Y_{S_1}]} \circ \cdots \circ R_{[Y_{S_{p-1}}], [Y_{S_p}]}
\]
is a random product of reflections induced by $\{Y_k\}$. We leave $G_p$ undefined if $S_p = \infty$. Finally, for $k \geq 0$, we define
\[
\tilde{Z}_k = G_pY_k, \ \ \ \text{ if } S_p \leq k < S_{p+1}.
\]
By induction, one can check that $\tilde{Z}_k \in 0X$ for all $k \geq 0$. For each $k$, let $L_k$ be the unique random integer such that $L_k(\omega) = p$ if $S_p(\omega) \leq k < S_{p+1}(\omega)$. Then $\tilde{Z}_k = G_{L_k}Y_k$ for all $k$. See Figure 7 for an illustration.

\begin{figure}[h]
\begin{center}
\includegraphics[width=9cm,height=4cm]{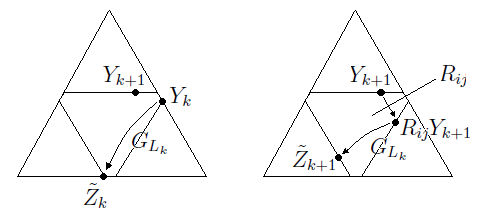}
\caption{Suppose that $[Y_k] = i$ and $[Y_{k+1}] = j$, where $i \neq j$. Then $\tilde{Z}_{k+1}$ is defined as $\tilde{Z}_{k+1} = (G_{L_k} \circ R_{ij}) Y_k$.}
\end{center}
\end{figure}

\begin{Example} \label{eg 4.2}
Consider the case $d = 1$. We compute $Y_k$, $G_{L_k}$ and $\tilde{Z}_k$ for the following sample path of $\{Z_n\}_{n = 0}^8$:
\begin{center}
\begin{tabular}{|c|c|c|c|c|c|c|c|c|c|}
  \hline
 $n$           & $0$ & $1$ & $2$ & $3$ & $4$ & $5$ & $6$ & $7$ & $8$ \\
  \hline
 $Z_n$         & $\vartheta$ & $0$ & $\vartheta$ & $1$ & $10$ & $100$ & $001$ & $01$ & $00$ \\
  \hline
 $k$           &   & $0$ &   &   & $1$ & $2$ &    & $3$ & $4$ \\
  \hline
 $Y_k$         &   & $0$ &   &   & $10$& $100$ &   & $01$  & $00$\\
  \hline
 $L_k$         &   & $0$  &   &  & $1$  & $1$  &  & $2$  & $2$ \\
  \hline
 $G_{L_k}$     &   & $id$  &   &   & $R_{01}$  & $R_{01}$  &  & $id$  & $id$ \\
  \hline
 $\tilde{Z}_k$ &   & $0$  &   &   & $01$  & $011$ &   & $01$  & $00$ \\
  \hline
\end{tabular}
\end{center}

\begin{figure}[h]
\begin{center}
\includegraphics[width=11.5cm,height=5cm]{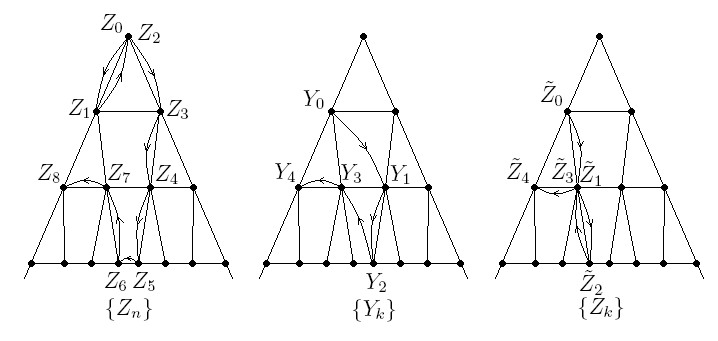}
\caption{Illustration of the processes $\{Z_n\}$, $\{Y_k\}$ and $\{\tilde{Z}_k\}$}
\end{center}
\end{figure}

\end{Example}

The main result of this section is the following.

\begin{Prop} \label{thm 5.2}
Under ${\mathbb{P}}_{\vartheta}$, $\{\tilde{Z}_k\}_{k \geq 0}$ is a simple random walk on $0X$ with ${\mathbb{P}}_{\vartheta}\{\tilde{Z}_0 = 0\} = 1$.
\end{Prop}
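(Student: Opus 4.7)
The plan is to establish three facts which together characterize $\{\tilde{Z}_k\}$ as the simple random walk on $0X$ started at $0$: (a) $\tilde{Z}_0 = 0$ almost surely, (b) $\tilde{Z}_k \in 0X$ for every $k \geq 0$, and (c) the conditional law of $\tilde{Z}_{k+1}$ given ${\mathcal{G}}_k$ is uniform on $N_{0X}(\tilde{Z}_k)$. Since the law in (c) will depend on ${\mathcal{G}}_k$ only through $\tilde{Z}_k$, the Markov property in the natural filtration of $\{\tilde{Z}_k\}$ and the SRW identification both follow at once.

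Parts (a) and (b) reduce to the inductive claim that $G_{L_k}([Y_k]) = 0$ for all $k$. Since every $g \in {\mathcal{A}}_d$ acts on $X$ by applying the corresponding permutation symbolwise (Lemma \ref{thm 4.2}), the first symbol of $\tilde{Z}_k = G_{L_k} Y_k$ equals $G_{L_k}([Y_k])$, so this identity forces $\tilde{Z}_k \in 0X$ and in particular $\tilde{Z}_0 = 0$. The base case holds because $T_0 = 1$ gives $Y_0 \in \{0, \ldots, d\}$ and $G_0 = R_{0, [Y_0]}$ sends $[Y_0]$ to $0$. The inductive step splits on whether the parity changes: if $[Y_{k+1}] = [Y_k]$ then $L_{k+1} = L_k$ and there is nothing to check; if $[Y_{k+1}] = j \neq i = [Y_k]$ then $G_{L_{k+1}} = G_{L_k} \circ R_{ij}$, so $G_{L_{k+1}}(j) = G_{L_k}(R_{ij}(j)) = G_{L_k}(i) = 0$ by the inductive hypothesis.

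For (c), I would condition on ${\mathcal{G}}_k$ with $Y_k = {\bf x}$, $i = [{\bf x}]$, and $G_{L_k} = g$ (so $g(i) = 0$ and $\tilde{Z}_k = g{\bf x}$). The driving identity is $\tilde{Z}_{k+1} = g Y_{k+1}$ when $[Y_{k+1}] = i$, and $\tilde{Z}_{k+1} = g R_{ij} Y_{k+1}$ when $[Y_{k+1}] = j \neq i$, and I would apply this to the three regimes of Lemma \ref{thm 5.1}. In Case (i), ${\bf x}$ is not of the form $ij^{m-1}$, which forces every $X$-neighbor of ${\bf x}$ to share the parity $i$ (a short horizontal-edge analysis in the Sierpinski graph), so $\tilde{Z}_{k+1} = g Y_{k+1}$ is uniform on $N_X(\tilde{Z}_k)$; the same shape argument applied to $\tilde{Z}_k$ gives $N_X(\tilde{Z}_k) = N_{0X}(\tilde{Z}_k)$. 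In Cases (ii) and (iii) the two formulas in the driving identity turn out to \emph{coincide} on the orbits $\{R_{ij}(ia): j = 0, \ldots, d\}$ and $\{{\bf y}, R_{ij}{\bf y}\}$ highlighted by the lemma: for Case (ii) one gets $\tilde{Z}_{k+1} = g(ia) = 0 g(a)$ from either element of the orbit, and for Case (iii) both $Y_{k+1} = {\bf y}$ and $Y_{k+1} = R_{ij}{\bf y}$ yield $\tilde{Z}_{k+1} = g{\bf y}$. Thus each orbit collapses to a single point of $0X$, carrying the probability prescribed by Lemma \ref{thm 5.1}.

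The main obstacle is the degree-counting in Case (iii): one must verify that ${\bf y} \mapsto g{\bf y}$ is a bijection from $iX \cap N_X({\bf x})$ onto $N_{0X}(\tilde{Z}_k)$ and that the unique $X$-neighbor $ji^{m-1}$ of ${\bf x}$ lying in $jX$ corresponds under $g$ to the unique $X$-neighbor $g(j) 0^{m-1}$ of $\tilde{Z}_k = 0 g(j)^{m-1}$ that escapes $0X$, giving $\deg_X({\bf x}) - 1 = \deg_{0X}(\tilde{Z}_k)$. Once these identifications are in place, and since the trichotomy of Lemma \ref{thm 5.1} is preserved by the symbolwise action of $g$, the conditional law of $\tilde{Z}_{k+1}$ depends on ${\mathcal{G}}_k$ only through $\tilde{Z}_k$ and matches $1/\deg_{0X}(\cdot)$ on $0X$-neighbors, completing the identification with the simple random walk on $0X$.
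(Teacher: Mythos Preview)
Your proposal is correct and follows essentially the same approach as the paper. Both arguments condition on ${\mathcal{G}}_k$ (equivalently on $Y_k$ and $G_{L_k}$), split into the three regimes of Lemma~\ref{thm 5.1}, and verify that the resulting conditional law of $\tilde{Z}_{k+1}$ is uniform on the $0X$-neighbors of $\tilde{Z}_k$; the only cosmetic difference is that the paper packages this as an induction on the path length $N$ for the joint law $\prod_{k=0}^{N-1} 1/\deg_0({\bf z}_k)$, whereas you state the one-step conditional law directly.
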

\begin{proof} We first check that ${\mathbb{P}}_{\vartheta}\{\tilde{Z}_0 = 0\} = 1$. Under ${\mathbb{P}}_{\vartheta}$, we have almost surely that $T_0 = 1$. Hence $Y_0 = Z_1$ and $G_{L_0} = G_0 = R_{0, [Y_0]}$. It follows that $\tilde{Z}_0 = R_{0, [Y_0]}Y_0 = 0$ almost surely.

\medskip

\noindent
Next we prove by induction on $N$ that
\[
{\mathbb{P}}_{\vartheta}\{\tilde{Z}_k = {\bf z}_k, 0 \leq k \leq N\} = \prod_{k = 0}^{N-1} \frac{1}{\mathrm{deg}_0({\bf z}_k)}
\]
for any path $\{{\bf z}_k\}_{k = 0}^N$ in $0X$ such that ${\bf z}_0 = 0$. Here $\mathrm{deg}_0$ denotes the degree in the subgraph $0X$, and, by convention, the product is $1$ when $N = 0$. This establishes that $\{\tilde{Z}_k\}$ is a simple random walk on $0X$.

\medskip

\noindent
The case $N = 0$ is trivial. Assume the claim for paths of length $N$ and consider the probability
\[
{\mathbb{P}}_{\vartheta}\{\tilde{Z}_k = {\bf z}_k, 0 \leq k \leq N + 1\}.
\]
The idea is to condition on the value of $G_{L_N}$. By iterated expectation,
\begin{eqnarray}
& & {\mathbb{P}}_{\vartheta}\{\tilde{Z}_k = {\bf z}_k, 0 \leq k \leq N + 1\} \nonumber \\
&=& {\mathbb{E}}_{\vartheta}\left[{\mathbb{E}}_{\vartheta} \left( \prod_{k = 0}^{N+1} {\bf 1}\{\tilde{Z}_k = {\bf z}_k\} | {\mathcal{G}}_N \right) \right] \nonumber \\
&=& \sum_{g \in {\mathcal{A}}_d} {\mathbb{E}}_{\vartheta} \left[\prod_{k = 0}^N {\bf 1}\{\tilde{Z}_k = {\bf z}_k\} {\bf 1}\{G_{L_N} = g\} {\mathbb{E}}_{\vartheta} \left( {\bf 1}\{G_{L_{N+1}}Y_{N+1} = {\bf z}_{N + 1}\} | {\mathcal{G}}_N \right) \right].  \label{eqn 5.1}
\end{eqnarray}
In the last equality we use the fact that $\{G_{L_k}\}$ and $\{\tilde{Z}_k\}$ are adapted to $\{{\mathcal{G}}_k\}$.

\medskip
\noindent
Now we distinguish three cases and use Lemma \ref{thm 5.1}.\\
(i) ${\bf z}_N$ is not of the form $0j^{m-1}$ where $j \neq 0$ and $m \geq 1$. Then on the event $\{\tilde{Z}_N = {\bf z}_N, G_{L_N} = g\}$ we have $Y_N = g^{-1}\tilde{Z}_N$ and $G_{L_{N+1}} = G_{L_N} = g$. It follows that on the event $\{\tilde{Z}_N = {\bf z}_N, G_{L_N} = g\}$ we have
\begin{eqnarray}
& & {\mathbb{E}}_{\vartheta}[{\bf 1}\{G_{L_{N+1}}Y_{N+1} = {\bf z}_{N + 1}\} | {\mathcal{G}}_K] \nonumber \\
&=& {\mathbb{P}}_{\vartheta}\{gY_{N+1} = {\bf z}_{N + 1} | {\mathcal{G}}_N\} \nonumber \\
&=& {\mathbb{P}}_{\vartheta}\{gY_{N+1} = {\bf z}_{N + 1} | Y_N\} \label{eqn 5.2}\\
&=& {\mathbb{P}}_{\vartheta}\{Y_{N+1} = g^{-1}{\bf z}_{N + 1} | Y_N = g^{-1}{\bf z}_N\} \nonumber \\
&=& \frac{1}{\mathrm{deg}(g^{-1}{\bf z}_N)} \label{eqn 5.3} \\
&=& \frac{1}{\mathrm{deg}({\bf z}_N)}. \nonumber
\end{eqnarray}
In the above, equality in (\ref{eqn 5.2}) is the simple Markov property of $\{Y_k\}$, and (\ref{eqn 5.3}) follows from Lemma \ref{thm 5.1}(i). Note that $\mathrm{deg}(g^{-1}{\bf z}_N) = \mathrm{deg}({\bf z}_N) = \mathrm{deg}_0({\bf z}_N)$. Putting this into (\ref{eqn 5.1}) and continuing the calculation, we get from the induction hypothesis that
\begin{eqnarray*}
& & {\mathbb{P}}_{\vartheta}\{\tilde{Z}_k = {\bf z}_k, 1 \leq k \leq N + 1\} \\
&=& \sum_{g \in {\mathcal{A}}_d} {\mathbb{E}}\left[\prod_{k = 0}^N {\bf 1}\{\tilde{Z}_k = {\bf z}_k\} {\bf 1}\{G_{L_N} = g\} \frac{1}{\mathrm{deg}_0({\bf z}_N)}\right]\\
&=& \frac{1}{\mathrm{deg}_0({\bf z}_N)} {\mathbb{E}}\left[\prod_{k = 0}^N 1\{\tilde{Z}_k = {\bf z}_k\}\right]\\
&=& \prod_{k = 0}^N \frac{1}{\mathrm{deg}_0({\bf z}_k)}.
\end{eqnarray*}
The remaining cases are similar.\\
(ii) ${\bf z}_N = 0$. On the event $\{\tilde{Z}_N = {\bf z}_N, G_{L_N} = g\}$, we have
\begin{eqnarray*}
&& {\mathbb{E}}_{\vartheta}[{\bf 1}\{\tilde{Z}_{N+1} = {\bf z}_{N+1}\}|{\mathcal{G}}_N] \\
&=& {\mathbb{P}}_{\vartheta}\{Y_{N+1} \in \bigcup_{j = 0}^d \{ (g \circ R_{[g^{-1}{\bf z}_N],j})^{-1} {\bf z}_{N+1}\} | Y_N = g^{-1}{\bf z}_N\},
\end{eqnarray*}
and by Lemma \ref{thm 5.1}(ii) this equals $\frac{1}{d+1} = \frac{1}{{\mathrm{deg}}_0(0)}$.\\
(iii) ${\bf z}_N = 0j^{m - 1}$ where $j \neq 0$ and $m \geq 2$. If $G_{L_N} = g$, then $g^{-1}{\bf z}_N$ has a unique horizontal neighbor whose parity (say $i$) is different from that of $g^{-1}{\bf z}_N$. Then, on the event $\{\tilde{Z}_N = {\bf z}_N, G_{L_N} = g\}$, we have
\begin{eqnarray*}
&& {\mathbb{E}}_{\vartheta}[{\bf 1}\{\tilde{Z}_{N+1} = {\bf z}_{N+1}\}|{\mathcal{G}}_N] \\
&=& {\mathbb{P}}_{\vartheta}\{Y_{N+1} \in \{g^{-1}{\bf z}_{N+1}, (g \circ R_{i, [g^{-1}{\bf z}_N]})^{-1}{\bf z}_{N+1} | Y_N = g^{-1}{\bf z}_N\},
\end{eqnarray*}
which equals $\frac{1}{{\mathrm{deg}}({\bf z}_N) - 1} = \frac{1}{{\mathrm{deg}_0}({\bf z}_N)}$ by Lemma \ref{thm 5.1}(iii).
\end{proof}

\end{section}

\bigskip
\begin{section}{Self-similar identity and hitting distribution}
First we relate the hitting distribution of $\{\tilde{Z}_k\}$ with that of $\{Z_n\}$. Since $0X$ is isomorphic to $X$, we may regard $0X$ as the augmented rooted tree of the IFS $\{F_{0i}\}_{i = 0}^d$ with self-similar set $K_0$. It follows that $\{\tilde{Z}_k\}$, which is a simple random walk on $0X$, converges almost surely to a point on $K_0$. Thus we get:

\begin{Lem} \label{thm 6.1}
Under ${\mathbb{P}}_{\vartheta}$, $\tilde{Z}_k$ converges almost surely to a point $\tilde{Z}_{\infty}$ on $K_0 \subset K$. For any Borel set $B$ in $K_0$, we have
\[
{\mathbb{P}}_{\vartheta}\{\tilde{Z}_{\infty} \in B\} = {\mathbb{P}}_{\vartheta}\{Z_{\infty} \in F_0^{-1}(B)\} = \nu_{\vartheta}(F_0^{-1}(B)).
\]
\end{Lem}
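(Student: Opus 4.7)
The plan is to exploit the fact that the subgraph $(0X, E|_{0X})$ is canonically isomorphic to the whole augmented rooted tree $(X, E)$, so that Proposition \ref{thm 5.2} reduces the problem to re-expressing the hitting distribution of a simple random walk on $X$ inside the cell $K_0$.

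First I would set up the graph isomorphism $\phi: X \to 0X$ defined by $\phi(\vartheta) = 0$ and $\phi(\mathbf{x}) = 0\mathbf{x}$. Vertical edges transport immediately: $\vartheta \sim i$ matches $0 \sim 0i$, and $\mathbf{x} \sim \mathbf{x}^-$ matches $0\mathbf{x} \sim 0\mathbf{x}^-$. Horizontal edges transport because $F_0$ is injective, so $F_{0\mathbf{x}}(K) \cap F_{0\mathbf{y}}(K) = F_0\bigl(F_{\mathbf{x}}(K) \cap F_{\mathbf{y}}(K)\bigr)$ is nonempty exactly when $F_\mathbf{x}(K) \cap F_\mathbf{y}(K)$ is. Under $\phi$, $(0X, E|_{0X})$ is itself an augmented rooted tree, associated with the IFS $\{F_{0i}\}_{i=0}^d$ whose self-similar set is $F_0(K) = K_0$. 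Theorem \ref{thm 2.1} applied to this new IFS identifies the Martin boundary of the simple random walk on $0X$ with $K_0$; combined with Proposition \ref{thm 5.2} this gives the almost sure convergence $\tilde Z_k \to \tilde Z_\infty \in K_0$.

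Next I would identify the distribution of $\tilde Z_\infty$. Set $W_k = \phi^{-1}(\tilde Z_k)$; by the graph isomorphism, $\{W_k\}$ is a simple random walk on $(X, E)$ started at $\vartheta$, so $W_k \to W_\infty$ almost surely with $W_\infty \sim \nu_\vartheta$. Because $K_{\tilde Z_k} = F_0(K_{W_k})$ and $\mathrm{diam}(K_{W_k}) \to 0$, the projection formula (\ref{eqn 2.2}) applied in both trees gives $\tilde Z_\infty = F_0(W_\infty)$ almost surely. Bijectivity of $F_0 : K \to K_0$ then yields, for every Borel $B \subset K_0$,
\[
\mathbb{P}_\vartheta\{\tilde Z_\infty \in B\} = \mathbb{P}_\vartheta\{W_\infty \in F_0^{-1}(B)\} = \nu_\vartheta(F_0^{-1}(B)) = \mathbb{P}_\vartheta\{Z_\infty \in F_0^{-1}(B)\},
\]
where the middle equality uses $W_\infty \sim \nu_\vartheta$ and the last one is the definition of $\nu_\vartheta$.

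The only place where care is genuinely required is the verification that $\phi$ preserves horizontal edges, which rests on the injectivity of $F_0$. Once this structural observation is in place, the rest is bookkeeping — comparing the limits of two simple random walks whose laws coincide under a graph isomorphism. No transition-probability estimates are needed.
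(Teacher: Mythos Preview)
Your proposal is correct and follows essentially the same approach as the paper, which treats the lemma as an immediate consequence of the paragraph preceding it: $0X$ is the augmented rooted tree of the IFS $\{F_{0i}\}_{i=0}^d$ with attractor $K_0$, so Proposition~\ref{thm 5.2} together with Theorem~\ref{thm 2.1} yields convergence and the change-of-variable formula via $F_0$. You have simply supplied the details the paper leaves implicit --- the explicit isomorphism $\phi$, the verification for horizontal edges via injectivity of $F_0$, and the identification $\tilde Z_\infty = F_0(W_\infty)$ --- and these are all sound.
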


\medskip

Next we consider the relation between $Z_{\infty}$ and $\tilde{Z}_{\infty}$. Recall that $p_{ij}$ is the midpoint of the vertices $p_i$ and $p_j$.

\begin{Lem} \label{thm 6.2}
There exists $G_{\infty} \in {\mathcal{A}}_d$, depending on $\omega$, such that $\tilde{Z}_{\infty} = G_{\infty} Z_{\infty}$.
\end{Lem}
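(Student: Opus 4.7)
I propose to prove the lemma by using the finiteness of $\mathcal{A}_d$ to extract an eventually constant subsequence of $\{G_{L_k}\}$, then passing to the limit in the defining relation $\tilde Z_k = G_{L_k} Y_k$.

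First I would verify that $Y_k \to Z_\infty$ on $K$ in the sense of (\ref{eqn 2.2}). Since $\{T_k\}$ is strictly increasing and $|Z_n| \to \infty$ almost surely by transience, the sequence $\{Y_k\} = \{Z_{T_k}\}$ is a subsequence of $\{Z_n\}$ with $|Y_k| \to \infty$; hence $\iota(Y_k) \to Z_\infty$ in $\mathbb{R}^d$.

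Since $|\mathcal{A}_d| = (d+1)! < \infty$, the pigeonhole principle yields (random) $g \in \mathcal{A}_d$ and indices $k_1 < k_2 < \cdots$ with $G_{L_{k_j}} = g$ for all $j$. Along this subsequence $\tilde Z_{k_j} = g Y_{k_j}$. Reasoning as in the proof of Lemma \ref{thm 4.3},
\[
|\iota(g Y_{k_j}) - g\, \iota(Y_{k_j})| \leq \mathrm{diam}(K_{gY_{k_j}}) \to 0,
\]
while $g\, \iota(Y_{k_j}) \to g Z_\infty$ by continuity of $g$ as an isometry of $\mathbb{R}^d$. Thus $\iota(\tilde Z_{k_j}) \to g Z_\infty$. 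On the other hand $\tilde Z_k \to \tilde Z_\infty$ by Lemma \ref{thm 6.1}, so also $\iota(\tilde Z_{k_j}) \to \tilde Z_\infty$. Equating the two limits gives $\tilde Z_\infty = g Z_\infty$, and I set $G_\infty := g$.

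I do not anticipate a substantive obstacle: once $Y_k \to Z_\infty$ is established, the finiteness of $\mathcal{A}_d$ reduces the limit argument to a routine subsequence extraction. The only mild caveat is that $g$ need not be unique when $Z_\infty$ has a nontrivial stabilizer in $\mathcal{A}_d$ (for instance, if $Z_\infty$ is a vertex $p_i$), but the lemma asserts existence only.
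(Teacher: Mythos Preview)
Your argument is correct and, in fact, cleaner than the paper's. The paper proceeds by a case analysis on whether $\tilde Z_\infty$ equals one of the midpoints $p_{0i}$: in the generic case it shows that the parity $[Y_k]$ is eventually constant, so the entire sequence $\{G_{L_k}\}$ stabilizes to some $G_\infty$; in the boundary case $\tilde Z_\infty = p_{0i}$ it argues that $[Y_k]$ eventually oscillates between at most two values, hence $G_{L_k}$ eventually takes at most two values, and picks one that recurs infinitely often. Your pigeonhole argument subsumes both cases at once: since $\mathcal{A}_d$ is finite, some value of $G_{L_k}$ recurs infinitely often regardless, and passing to the limit along that subsequence suffices because $\tilde Z_k$ is already known to converge. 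What the paper's approach buys is the slightly stronger structural information that $G_{L_k}$ is eventually constant away from the midpoints; this is not needed for the lemma as stated, and your route is the more economical one. Your remark about non-uniqueness of $G_\infty$ when $Z_\infty$ has nontrivial stabilizer is also apt and matches the spirit of the paper's second case.
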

\begin{proof}
Fix $\omega$ such that $Z_{\infty}(\omega)$ exists and we will suppress $\omega$ in what follows. Since $Y_k$ is a subsequence of $Z_n$, we have
\[
Z_{\infty} = Y_{\infty} := \lim_{k \rightarrow \infty} Y_k.
\]
First suppose that $\tilde{Z}_{\infty} \neq p_{0i}$ for all $i \neq 0$. We claim that there exists $N$ such that $\rho(Y_k) = \rho(Y_N)$ for all $k \geq N$. That is, the parity stays constant for $k$ large enough. To see this, note that the parity changes only when $\tilde{Z}_k = 0i^l$ for some $i$ and $l$. Now since $\tilde{Z}_{\infty} \neq p_{0i}$, there is some $\epsilon > 0$ and $L \in {\mathbb{N}}$ such that when $k$ is large, ${\mathrm{dist}}(\iota(\tilde{Z}_k), K_{0i^L}) \geq \epsilon$ for all $i$. It follows that $\tilde{Z}_k \neq 0i^l$, where $l \geq L$, when $k$ is large. Hence $G_{L_k} = G_{L_N} =: G_{\infty}$ for all $k \geq N$ and
\[
\tilde{Z}_k = G_{\infty}Y_k, \ \ \ k \geq N.
\]
Letting $k \rightarrow \infty$, we get $\tilde{Z}_{\infty} = G_{\infty}Y_{\infty} = G_{\infty}Z_{\infty}$.

\medskip

Next suppose that $\tilde{Z}_{\infty} = p_{0i}$ for some $i$. Since every $g \in {\mathcal{A}}_d$ must map $p_{0i}$ to some other midpoint, by convergence of $\{Y_k\}$ we see that for large $k$, $[Y_k]$ takes at most two values, say $j$ and $l$. Hence, there exists $G \in {\mathcal{A}}_d$ such that for each sufficiently large $k$, $G_{L_k}$ is either $G$ or $G \circ R_{jl}$. We may take $G_{\infty}$ to be any one which appears infinitely often.
\end{proof}

Let ${\mathcal{A}}'_d = \{g \in {\mathcal{A}}_d: gK_0 = K_0\}$ be the subgroup of ${\mathcal{A}}_d$ that fixes the cell $K_0$. It corresponds to the subgroup of ${\mathcal{S}}_{d+1}$ that fixes the symbol $0$ and hence is isomorphic to ${\mathcal{S}}_d$. The next lemma, which is purely geometric, is straightforward to prove.

\begin{Lem} \label{thm 6.3}
Suppose $B \subset K_0$ is Borel and is invariant under ${\mathcal{A}}'_d$, i.e., $gB = B$ for all $g \in {\mathcal{A}}'_d$. Then $\bigcup_{i = 0}^d R_{0i}B$ is invariant under ${\mathcal{A}}_d$, i.e.,
\[
g(\bigcup_{i = 0}^d R_{0i}B) = \bigcup_{i = 0}^d R_{0i}B, \ \ \ g \in {\mathcal{A}}_d.
\]
Moreover, $\left( \bigcup_{i = 0}^d R_{0i} B\right) \cap K_0 = B$.
\end{Lem}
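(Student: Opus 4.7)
The plan is to handle the two assertions separately, using the identification $\mathcal{A}_d \cong \mathcal{S}_{d+1}$ and the fact (implicit in the paper) that $\mathcal{A}'_d$ is precisely the stabilizer of the vertex $p_0$, equivalently the stabilizer of the cell $K_0$ (indeed $p_0$ is the unique vertex of the simplex lying in $K_0$, and conversely any $g$ fixing $p_0$ commutes with $F_0$). Throughout, for $g \in \mathcal{A}_d$ and $i \in \{0,\ldots,d\}$, write $g(i)$ for the index with $g(p_i) = p_{g(i)}$.

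For the invariance assertion, the strategy is to rewrite each composition $g \circ R_{0i}$ in the form $R_{0,g(i)} \circ h$ with $h \in \mathcal{A}'_d$. The crucial one-line observation is that
\[
R_{0,g(i)} \circ g \circ R_{0i} \colon p_0 \longmapsto p_i \longmapsto p_{g(i)} \longmapsto p_0,
\]
so that this element lies in the stabilizer of $p_0$, i.e.\ in $\mathcal{A}'_d$. Calling it $h_i$ and invoking the $\mathcal{A}'_d$-invariance of $B$, one obtains $g(R_{0i}B) = R_{0,g(i)}(h_i B) = R_{0,g(i)} B$. Since $i \mapsto g(i)$ permutes $\{0,\ldots,d\}$, summing over $i$ yields $g\bigl(\bigcup_{i=0}^d R_{0i}B\bigr) = \bigcup_{j=0}^d R_{0j}B$, as required.

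For the second assertion, the inclusion $B \subset \bigl(\bigcup_{i=0}^d R_{0i}B\bigr)\cap K_0$ is immediate from $R_{00} = \mathrm{id}$ and $B \subset K_0$. For the reverse inclusion, the key geometric input is that for $i \neq 0$ the reflection $R_{0i}$ maps $K_0$ onto $K_i$, while in the Sierpinski gasket the two cells meet only at the midpoint, $K_0 \cap K_i = \{p_{0i}\}$. Consequently any $x \in R_{0i}B \cap K_0$ must satisfy $x = p_{0i}$; but $R_{0i}$ fixes $p_{0i}$, so the preimage $R_{0i}^{-1}(x) = p_{0i} = x$ lies in $B$, giving $x \in B$. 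No serious obstacle is anticipated: the whole argument is routine group-theoretic and geometric bookkeeping, the only mildly subtle point being the fixed-point property of $R_{0i}$ at $p_{0i}$ that absorbs the boundary overlap.
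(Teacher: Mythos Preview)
Your argument is correct. The factorization $g\circ R_{0i} = R_{0,g(i)}\circ h_i$ with $h_i\in\mathcal{A}'_d$, verified by tracking $p_0$, is exactly the right group-theoretic observation for the first part, and the second part is handled cleanly via $K_0\cap K_i=\{p_{0i}\}$ together with $R_{0i}(p_{0i})=p_{0i}$.

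There is nothing to compare against: the paper omits the proof entirely, stating only that the lemma ``is purely geometric'' and ``straightforward to prove.'' Your write-up is precisely the kind of routine verification the author had in mind, and it would serve perfectly well as the omitted argument.
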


The reason of using ${\mathcal{A}}'_d$-invariant sets is that for fixed $Z_{\infty}$, $\tilde{Z}_{\infty} = G_{\infty}Z_{\infty}$ may take one of several values depending on the sequence of reflections made.

\begin{Lem} \label{thm 6.4}
Suppose $B \subset K_0$ is Borel and is ${\mathcal{A}}'_d$-invariant. Then
\[
\{\tilde{Z}_{\infty} \in B\} = \{Z_{\infty} \in \bigcup_{i = 0}^d R_{0i} B\}.
\]
\end{Lem}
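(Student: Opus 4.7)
The plan is to combine the three preceding lemmas in a short chain of equivalences, avoiding any case analysis. Specifically, Lemma \ref{thm 6.2} supplies the random identity $\tilde Z_\infty = G_\infty Z_\infty$ with $G_\infty \in \mathcal{A}_d$; Lemma \ref{thm 6.1} tells us that $\tilde Z_\infty$ always lies in $K_0$; and Lemma \ref{thm 6.3} furnishes two crucial geometric facts about $A := \bigcup_{i=0}^d R_{0i} B$, namely that $A$ is $\mathcal{A}_d$-invariant and that $A \cap K_0 = B$.

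First I would rewrite the target event using Lemma \ref{thm 6.2} as $\{\tilde Z_\infty \in B\} = \{G_\infty Z_\infty \in B\}$ (a.s.). Second, using $B = A \cap K_0$ from Lemma \ref{thm 6.3} together with the fact that $G_\infty Z_\infty = \tilde Z_\infty \in K_0$ almost surely by Lemma \ref{thm 6.1}, I would observe that
\[
\{G_\infty Z_\infty \in B\} = \{G_\infty Z_\infty \in A \cap K_0\} = \{G_\infty Z_\infty \in A\},
\]
the last equality holding because the constraint $G_\infty Z_\infty \in K_0$ is automatic. Finally, since $A$ is $\mathcal{A}_d$-invariant and $G_\infty(\omega) \in \mathcal{A}_d$ for every sample path, we have $G_\infty^{-1}(A) = A$ a.s., so
\[
\{G_\infty Z_\infty \in A\} = \{Z_\infty \in G_\infty^{-1}A\} = \{Z_\infty \in A\} = \Bigl\{Z_\infty \in \bigcup_{i=0}^d R_{0i} B\Bigr\}.
\]
Concatenating these identities gives the asserted set equality, up to the null set where the limits or $G_\infty$ are undefined.

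The argument is essentially algebraic and I do not anticipate a serious obstacle: the only subtle point is to resist the temptation to case-analyze according to the value of $G_\infty$ and instead notice that the $\mathcal{A}_d$-invariance of $A$ absorbs the random choice of $G_\infty$ in one stroke. The two-sided nature of the desired equivalence is then handled simultaneously because every link in the chain is an honest equality of events rather than a one-way implication; in particular, the awkward boundary-point ambiguities in $G_\infty$ (midpoints $p_{0i}$, etc.) highlighted in the proof of Lemma \ref{thm 6.2} become irrelevant here, since they concern only which specific $G_\infty \in \mathcal{A}_d$ is recorded and $A$ is invariant under all of $\mathcal{A}_d$.
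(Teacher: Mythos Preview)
Your argument is correct and uses the same ingredients as the paper's proof: Lemma~\ref{thm 6.2} for $\tilde Z_\infty = G_\infty Z_\infty$, Lemma~\ref{thm 6.3} for the $\mathcal{A}_d$-invariance of $A=\bigcup_i R_{0i}B$ and the identity $A\cap K_0=B$, together with $\tilde Z_\infty\in K_0$. The only difference is cosmetic: the paper verifies the two inclusions separately, whereas you package them into a single chain of equalities by exploiting $G_\infty^{-1}A=A$ directly; both are the same proof.
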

\begin{proof}
By Lemma \ref{thm 6.2}, $\tilde{Z}_{\infty} = G_{\infty} Z_{\infty}$ for some (random) $G_{\infty} \in {\mathcal{A}}_d$. If $\tilde{Z}_{\infty} \in B$, then
\[
Z_{\infty} = G_{\infty}^{-1} \tilde{Z}_{\infty} \in G_{\infty}^{-1} B \subset \bigcup_{i = 0}^d R_{0i}B
\]
by Lemma \ref{thm 6.3}. On the other hand, suppose $Z_{\infty} \in \bigcup_{i = 0}^d R_{0i}B$ for some $i$ and let $G_{\infty}$ be as above. By Lemma \ref{thm 6.3} again, we have
\[
\tilde{Z}_{\infty} = G_{\infty} Z_{\infty} \in \left(G_{\infty} \bigcup_{i = 0}^d R_{0i}B \right) \cap K_0 = B.
\]
\end{proof}

By Lemmas \ref{thm 6.1} and \ref{thm 6.4}, we immediately obtain the following crucial result.

\begin{theorem} \label{thm 6.5} {\bf (Self-similar identity)}
Suppose $B \subset K_0$ is Borel and is ${\mathcal{A}}'_d$-invariant, i.e., $gB = B$ for all $g \in {\mathcal{A}}'_d$. Then
\[
\nu_{\vartheta}(S_0^{-1}B) = \nu_{\vartheta}(\bigcup_{i = 0}^d R_{0i}B).
\]
\end{theorem}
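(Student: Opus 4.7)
The plan is short because Theorem 6.5 is stated as an immediate consequence of Lemmas 6.1 and 6.4, so essentially all of the work has been front-loaded. First I would apply Lemma 6.1 to the Borel set $B \subset K_0$, which converts the left-hand side of the desired identity into a probability about the reflected process:
\[
\nu_{\vartheta}(F_0^{-1}(B)) \;=\; \mathbb{P}_{\vartheta}\{\tilde{Z}_{\infty} \in B\}.
\]
This step does not use the $\mathcal{A}'_d$-invariance hypothesis; it only uses Proposition 5.2 (so that $\{\tilde{Z}_k\}$ is the simple random walk on $0X$) together with the canonical identification of $0X$ with the augmented rooted tree of $K_0$ via $F_0$.

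Second, I would bring the $\mathcal{A}'_d$-invariance hypothesis on $B$ into play through Lemma 6.4, which gives the sample-path identity
\[
\{\tilde{Z}_{\infty} \in B\} \;=\; \Bigl\{Z_{\infty} \in \bigcup_{i=0}^{d} R_{0i}B\Bigr\}.
\]
Taking $\mathbb{P}_{\vartheta}$ of both sides yields
\[
\mathbb{P}_{\vartheta}\{\tilde{Z}_{\infty} \in B\} \;=\; \nu_{\vartheta}\!\left(\bigcup_{i=0}^{d} R_{0i}B\right),
\]
and chaining this with the equality from the first paragraph produces the self-similar identity.

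There is no genuine obstacle at this last step; the difficulty has already been absorbed into Lemmas 6.2 and 6.3, namely in showing that $\tilde{Z}_{\infty} = G_{\infty} Z_{\infty}$ for some (random) $G_{\infty} \in \mathcal{A}_d$, and in the geometric fact that $\bigcup_{i=0}^{d} R_{0i}B$ is $\mathcal{A}_d$-invariant and meets $K_0$ exactly in $B$. Together these two facts remove the sample-path dependence of $G_{\infty}$ from the event $\{\tilde{Z}_{\infty} \in B\}$ precisely when $B$ is $\mathcal{A}'_d$-invariant, which is why that hypothesis is imposed. Theorem 6.5 then just packages everything cleanly, so that combined with the group invariance identity of Theorem 4.4 one can, in the next step, pin down $\nu_{\vartheta}$ as the normalized Hausdorff measure $\mu$.
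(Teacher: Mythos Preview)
Your proposal is correct and matches the paper's own proof exactly: the paper simply states that Theorem~\ref{thm 6.5} follows immediately from Lemmas~\ref{thm 6.1} and~\ref{thm 6.4}, which is precisely the two-step chaining you describe. (Note that $S_0^{-1}$ in the statement and $F_0^{-1}$ in Lemma~\ref{thm 6.1} denote the same map.)
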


\begin{figure}[h]
\begin{center}
\includegraphics[width=7.5cm,height=3.5cm]{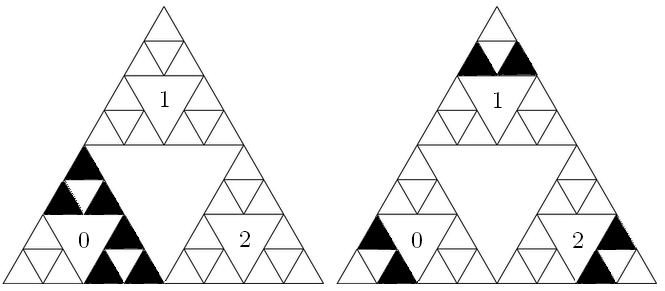}
\caption{The two sets have the same probability under $\nu_{\vartheta}$.}
\end{center}
\end{figure}

\medskip

We are now ready to prove the main theorem.

\medskip

\noindent
{\it Proof of Theorem \ref{thm 1.1}.} Let $\mu$ be the normalized $\alpha$-dimensional Hausdorff measure on $K$, where $\alpha = \dim_H K$. We will show that $\mu$ is the unique Borel probability measure on $K$ that satisfies the identities in Theorems \ref{thm 4.4} and \ref{thm 6.5}. Since $\nu_{\vartheta}$ has been shown to satisfy these identifies, this implies that $\nu_{\vartheta} = \mu$.

\medskip

Let $\lambda$ be any Borel probability measure on $K$ satisfying the identities. We will complete the proof assuming the claim that $\lambda$ has no atoms on the dyadic points, i.e., $\lambda(p_{{\bf x}}) = 0$ for all ${\bf x} \in X \setminus \{\vartheta\}$. This allows us to use additivity for sets that intersect only at dyadic points.

\medskip

It suffices to show that $\lambda(K_{{\bf x}}) = \frac{1}{(d+1)^{|{\bf x}|}}$ for all ${\bf x} \in X$. We proceed by induction on $|{\bf x}|$, the length of ${\bf x}$. By definition of $\lambda$, we have $\lambda(K) = 1$. For the first level, we have
\[
1 = \lambda(K) = \lambda(\bigcup_{i = 0}^d K_i) = \lambda(K_0) +  \sum_{i = 1}^d \lambda(R_{0i} K_0) = (d+1) \lambda(K_0),
\]
where in the last equality we used group invariance and the fact that $R_{0i} K_0 = K_i$. Hence $\lambda(K_i) = 1/(d+1)$ for all $i$.

\medskip

Now suppose that $\lambda(K_{{\bf x}}) = \frac{1}{(d+1)^{|{\bf x}|}}$ for all ${\bf x}$ with $|{\bf x}| = m$. Applying the self-similar identity, we have
\[
\frac{1}{(d+1)^m} = \lambda(K_{0^m}) = \lambda(\bigcup_{i = 0}^d K_{i^{m+1}}) = (d+1) \lambda(K_{0^{m+1}}).
\]
Hence $\lambda(K_{0^{m+1}}) = \frac{1}{(d+1)^m}$. Next, fix any $\bf x$ with $|{\bf x}| = m$ and ${\bf x} \neq 0^m$. Consider the set
\[
\bigcup_{g \in {\mathcal{A}}'_d} gK_{0{\bf x}} \subset K_0.
\]
Observe that the sets in the union intersect only at dyadic points and is invariant under ${\mathcal{A}}'_d$. By the self-similar identity, we have
\[
\lambda(\bigcup_{g \in {\mathcal{A}}'_d} gK_{\bf x}) = \lambda(S_0^{-1} \bigcup_{g \in {\mathcal{A}}'_d} gK_{0{\bf x}}) = \lambda(\bigcup_{i = 0}^d \bigcup_{g \in {\mathcal{A}}'_d} gK_{0{\bf x}}).
\]
Using group invariance and the fact that $\#({\mathcal{A}}'_d) = d!$, we get
\[
\frac{d!}{(d+1)^m} = (d+1) d! \lambda(K_{0{\bf x}}).
\]
It follows that $\lambda(K_{0{\bf x}}) = \frac{1}{(d+1)^{m+1}}$ for all $|{\bf x}| = m$, and applying $R_{0i}$, $i \neq 0$, this implies that $\lambda(K_{{\bf y}}) = \frac{1}{(d+1)^{m+1}}$ for all $|{\bf y}| = m+1$. This completes the induction argument.

\medskip

It remains to verify the claim that $\lambda(p_{{\bf x}}) = 0$ for all ${\bf x}$. The proof is to show by induction on the level of ${\bf x}$ that all $p_{\bf x}$ have equal probability. The method is the same as the above induction and we leave the argument to the reader. And since the set of dyadic points is infinite, this implies that the probability is $0$.

\begin{flushright}
$\Box$
\end{flushright}
\end{section}

\bigskip
\begin{section}{Remarks and open questions}
There are many more things that can be said in this model. Using the reflection principle and induction on $N$, we can show the following:

\begin{Prop}
For $N \geq 0$, let $\tau_N = \inf\{n \geq 0: |Z_n| = N\}$. Then under ${\mathbb{P}}_{\vartheta}$, the distribution of $Z_{\tau_N}$ is uniform on $\{0, ..., d\}^N$.
\end{Prop}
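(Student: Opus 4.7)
The plan is to induct on $N$, combining the reflection principle (Proposition \ref{thm 5.2}) with the group invariance from Lemma \ref{thm 4.2}. The base case $N = 0$ is trivial, and $N = 1$ is immediate: the root $\vartheta$ has no parent and no horizontal edges, hence exactly $d+1$ neighbors $\{0, 1, \ldots, d\}$, so $Z_{\tau_1} = Z_1$ is uniform.

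For the inductive step I first identify $Z_{\tau_{N+1}}$ as a value of the skeleton $\{Y_k\}$. Between $T_{k-1}$ and $T_k$ the walk $\{Z_n\}$ is confined to $\{Z_{T_{k-1}}\} \cup \{\vartheta\} \cup {\mathcal{N}}_{Z_{T_{k-1}}}$, whose vertices all have level at most $|Z_{T_{k-1}}|$; moreover, $|Y_k|$ can increase by at most one at a time (any horizontal or downward move from the confining set cannot exceed this). Hence, writing $k^{\ast} = \inf\{k \geq 0 : |Y_k| = N+1\}$, we have $\tau_{N+1} = T_{k^{\ast}}$ and $Z_{\tau_{N+1}} = Y_{k^{\ast}}$.

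By Proposition \ref{thm 5.2}, $\{\tilde{Z}_k\}$ is a simple random walk on $0X$ starting at $0$, and since $|\tilde{Z}_k| = |Y_k|$ the index $k^{\ast}$ is also the first time $|\tilde{Z}_k| = N+1$. Applying the inductive hypothesis to $\{\tilde{Z}_k\}$ through the canonical isomorphism $0X \cong X$, I obtain that $\tilde{Z}_{k^{\ast}}$ is uniform on $\{0 i_1 \cdots i_N : i_j \in \{0, \ldots, d\}\}$. Combined with the identity $\tilde{Z}_{k^{\ast}} = G_{L_{k^{\ast}}} Z_{\tau_{N+1}}$, this provides the key link to the distribution at level $N+1$.

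To conclude, introduce an auxiliary uniform random element $g \in {\mathcal{A}}_d$, independent of all processes. A direct calculation shows that $g \tilde{Z}_{k^{\ast}}$ -- the ${\mathcal{A}}_d$-symmetrization of a uniform law on $\{0\} \times \{0, \ldots, d\}^N$ -- is uniform on $\{0, \ldots, d\}^{N+1}$. Setting $h := g G_{L_{k^{\ast}}}$, the uniformity and independence of $g$ force $h$ to be uniform on ${\mathcal{A}}_d$ and independent of $Z_{\tau_{N+1}}$, so $h Z_{\tau_{N+1}} = g \tilde{Z}_{k^{\ast}}$ is uniform on $\{0, \ldots, d\}^{N+1}$. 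Finally, since $\tau_{N+1}$ depends only on $|Z_n|$, the distribution of $Z_{\tau_{N+1}}$ is ${\mathcal{A}}_d$-invariant by the argument of Theorem \ref{thm 4.4}; hence $h Z_{\tau_{N+1}}$ has the same law as $Z_{\tau_{N+1}}$, yielding the required uniformity. The main obstacle is absorbing the path-dependent random group element $G_{L_{k^{\ast}}}$ into an independent uniform factor -- the auxiliary-randomization trick together with the pre-existing group invariance does this cleanly.
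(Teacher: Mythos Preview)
Your argument is correct and follows essentially the same strategy the paper sketches: induct on $N$, use the reflection principle to identify $\tilde{Z}_{k^{\ast}}$ with the level-$N$ hitting point of a simple random walk on $0X$, apply the induction hypothesis there, and combine with ${\mathcal{A}}_d$-invariance of the law of $Z_{\tau_{N+1}}$. The one technical difference is in how you dispose of the path-dependent group element $G_{L_{k^{\ast}}}$. The paper's route is to reformulate the discrete self-similar identity only for ${\mathcal{A}}'_d$-invariant subsets $B\subset\{0\}\times\{0,\ldots,d\}^N$ (mirroring Theorem~\ref{thm 6.5} and Lemma~\ref{thm 6.4}), which makes the value of $G_{L_{k^{\ast}}}$ irrelevant and then forces uniqueness of the uniform law as in the proof of Theorem~\ref{thm 1.1}. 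Your auxiliary-randomization device---multiplying on the left by an independent Haar-uniform $g\in{\mathcal{A}}_d$ so that $h=gG_{L_{k^{\ast}}}$ is again uniform and independent of the walk---achieves the same effect more directly and bypasses the ${\mathcal{A}}'_d$-invariance bookkeeping. Both approaches rest on the same two ingredients (Proposition~\ref{thm 5.2} and the level-preserving ${\mathcal{A}}_d$-symmetry of the walk); yours trades a measure-uniqueness argument for a short group-averaging computation.
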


The idea of the proof is to reformulate the group invariance and self-similar identities in terms of $Z_{\tau_N}$ and $\tilde{Z}_{\tau_N}$. For example, the group invariance identity will take the form
\[
{\mathbb{P}}_{\vartheta}\{Z_{\tau_N} \in B\} = {\mathbb{P}}_{\vartheta}\{Z_{\tau_N} \in gB\}, \ \ \ B \subset \{0, ..., d\}^N.
\]
Since $\tilde{Z}_k$ is the simple random walk on $0X$ starting at $0$, the distribution of $\tilde{Z}_{\tau_N}$ can be expressed by that of $Z_{\tau_{N-1}}$. This allows us to use the induction hypothesis (and for small $N$ the proposition follows by direct calculations). Together with a limiting argument, this gives an alternative approach to Theorem \ref{thm 1.1}.

\medskip

We have only shown that $\nu_{\vartheta} = \mu$. How about $\nu_{{\bf x}}$ for ${\bf x} \neq \vartheta$? As remarked in the beginning of Section 5, the reflection principle can be formulated for any starting point. For other starting points, the same method can be used to show the following:

\begin{Prop}
Let ${\bf x} = 0{\bf y} \in 0X$, where ${\bf y} \in X$. Define $\sigma {\bf x} = {\bf y}$. Then $\nu_{\sigma {\bf x}}(S_0^{-1}B) = \nu_{{\bf x}}(\bigcup_{i = 0}^d R_{0i}B)$ for all $B \subset K_0$ which is invariant under ${\mathcal{A}}'_d$.
\end{Prop}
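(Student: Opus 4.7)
The plan is to rerun the reflection-principle argument of Sections 4 and 5 with starting point ${\bf x} = 0{\bf y} \in 0X$ in place of $\vartheta$, and then combine the resulting analogs of Lemmas 6.1 and 6.4 exactly as in the proof of Theorem 6.5.

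First, I would redefine the stopping times and the auxiliary Markov chain $\{Y_k\}$ for the process started at ${\bf x}$: since $Z_0 = {\bf x} \neq \vartheta$ almost surely under $\mathbb{P}_{{\bf x}}$, set $T_0 = 0$ and keep the recursive definition $T_{k+1} = \inf\{n > T_k : Z_n \notin \{Z_{T_k}\} \cup \{\vartheta\} \cup \mathcal{N}_{Z_{T_k}}\}$. Then $Y_0 = {\bf x}$ and $G_0 = R_{0,[Y_0]} = R_{0,0} = \mathrm{id}$, and the recursion for $G_{p+1}$ is unchanged. The reflected process $\tilde{Z}_k = G_{L_k} Y_k$ is then defined as in Section 4, and $\tilde{Z}_0 = {\bf x}$ almost surely. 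Next, I would verify the analog of Proposition 5.2: under $\mathbb{P}_{{\bf x}}$, $\{\tilde{Z}_k\}$ is a simple random walk on $0X$ starting at ${\bf x}$. The induction on path length in the proof of Proposition 5.2 uses only Lemma 5.1 and the adaptedness of $\{G_{L_k}\}$ to $\{\mathcal{G}_k\}$; it does not use the specific starting point. One simply replaces the base case "almost surely $\tilde{Z}_0 = 0$" by "almost surely $\tilde{Z}_0 = {\bf x}$" and the rest of the conditioning argument goes through verbatim, yielding the correct transition probabilities $1/\mathrm{deg}_0(\cdot)$ for the induced walk on $0X$.

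With this in hand, identify $0X$ with $X$ via the canonical isomorphism $\sigma : 0{\bf z} \mapsto {\bf z}$. Under this identification, the simple random walk $\{\tilde{Z}_k\}$ on $0X$ starting at ${\bf x}$ corresponds to the simple random walk on $X$ starting at $\sigma {\bf x}$, and its boundary $K_0 = S_0(K)$ corresponds to $K$ via $S_0$. Hence, for any Borel $B \subset K_0$,
\[
\mathbb{P}_{{\bf x}}\{\tilde{Z}_\infty \in B\} \;=\; \mathbb{P}_{\sigma {\bf x}}\{Z_\infty \in S_0^{-1}(B)\} \;=\; \nu_{\sigma {\bf x}}(S_0^{-1}B),
\]
which is the analog of Lemma 6.1 with starting point ${\bf x}$. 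On the other hand, Lemmas 6.2 and 6.4 are proved purely from the identity $\tilde{Z}_k = G_{L_k} Y_k$, the convergence of $Y_k$, and geometric properties of $\mathcal{A}'_d$; none of these arguments uses the starting distribution, so for every $\mathcal{A}'_d$-invariant Borel $B \subset K_0$ one still has $\{\tilde{Z}_\infty \in B\} = \{Z_\infty \in \bigcup_{i=0}^d R_{0i} B\}$ pathwise. Combining these two facts gives
\[
\nu_{\sigma {\bf x}}(S_0^{-1}B) \;=\; \mathbb{P}_{{\bf x}}\Bigl\{Z_\infty \in \bigcup_{i=0}^d R_{0i} B\Bigr\} \;=\; \nu_{{\bf x}}\Bigl(\bigcup_{i=0}^d R_{0i} B\Bigr),
\]
which is the stated identity.

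The main obstacle will be bookkeeping the starting-point case in the analog of Proposition 5.2. In particular, one should check that the case analysis at the first step ($k = 0$) is still covered by Lemma 5.1: the vertex ${\bf x} = 0{\bf y}$ may itself be of the form $0i^{m-1}$ with $i \neq 0$ treated in parts (ii)--(iii), in which case the very first transition of $\{\tilde{Z}_k\}$ involves a nontrivial reflection of neighbors of ${\bf x}$. Once one verifies that the same union-of-two-neighbors calculation yields $1/\mathrm{deg}_0({\bf x})$, no new ingredient is required and the remainder of the proof is a direct transcription of the arguments leading to Theorem 6.5.
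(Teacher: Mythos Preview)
Your proposal is correct and is precisely the approach the paper intends: the paper does not give a separate proof of this proposition but simply remarks that ``the reflection principle can be formulated for any starting point'' and that ``the same method can be used'' to obtain it. Your plan of setting $T_0 = 0$, noting $G_0 = R_{0,0} = \mathrm{id}$, and rerunning the proofs of Proposition~\ref{thm 5.2} and Lemmas~\ref{thm 6.1}--\ref{thm 6.4} with starting point ${\bf x}$ is exactly the intended ``slight modification''.
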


\medskip
\noindent
{\bf Q1}. Can the above be used to derive estimates of $\nu_{{\bf x}}$?

\medskip

Our method does not require any estimate of the Martin kernel, and this is both an advantage and a disadvantage. Even for the simplest case $d = 1$, we have not been able to derive good estimates of the Martin kernels.

\medskip
\noindent
{\bf Q2}. When $d = 1$, can we prove directly (without using the hyperbolic boundary as in \cite{[WL]}) that the Martin boundary is homeomorphic to $[0, 1]$?
\bigskip

It is natural to consider generalization of the reflection principle to other highly symmetric augmented rooted trees. As we have seen, the crucial objects needed is that each $iX$ is isomorphic to $X$, and there is a `local' reflection $R_{ij} \in \mathrm{Aut}(iX \cup jX)$ (and a corresponding map on $K$) whenever $i \neq j$. Indeed, there are many cases where our method is applicable. An example is the {\it pentagon fractal}.
\begin{figure}[h]
\begin{center}
\includegraphics[width=4.5cm,height=4.5cm]{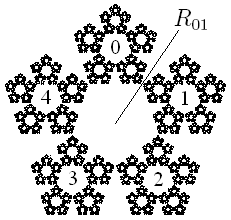}
\caption{The pentagon fractal with a reflection $R_{01}$.}
\end{center}
\end{figure}

Here the symmetry group is the dihedral group $D_5$. Another example is {\it Lindstr{\o}m's snowflake} (see \cite{[Ku]}), where the symmetry group is the dihedral group $D_6$. For both cases the hitting distribution starting from the root $\vartheta$ is uniform. The arguments are the same and we leave the details to the reader. So far we have not been able to generalize this to an axiomatic framework such as the class of {\it nested fractals} (see \cite{[Ku]}).

\medskip
\noindent
{\bf Q3}. Can the results of this paper be generalized to all nested fractals?

\medskip

We may think about the simple random walks on Sierpinski graphs as a special case of the framework in \cite{[WL]}. In that general setting, very little is known about the hitting distributions. As is mentioned in the introduction, these measures serve as the reference measures of the induced Dirichlet forms in \cite{[WL]}, and understanding them is of value to analyze the associated jump processes.

\medskip
\noindent
{\bf Q4}. For the simple (or a strictly reversible, see \cite{[WL]}) random walk on the augmented rooted tree of a self-similar set satisfying OSC, are the hitting distributions absolutely continuous with respect to the Hausdorff (or a self-similar) measure?

\end{section}

\bigskip

\noindent {\bf Acknowledgements}\\
The author would like to thank Professor Ka-Sing Lau for his guidance and support during the preparation of this paper.

\bigskip

\end{document}